\theoremstyle{definition}
\newtheorem{conjecture}{Conjecture}
\theoremstyle{plain}
\newtheorem{proposition}{Proposition}
\newtheorem{theorem}{Theorem}
\newlist{abbrv}{itemize}{1}
\setlist[abbrv,1]{label=,labelwidth=0.9in,align=parleft,noitemsep,leftmargin=!}
\newcommand{\rv}[1]{\boldsymbol{#1}}
\newcommand{\ub}[1]{\overline{#1}}
\newcommand{\geo}[1]{\mathtt{#1}}
\DeclareMathOperator{\subj}{s.t.}
\title{Tight bounds on the maximal perimeter and the maximal width of convex small polygons}
\author{Christian Bingane\thanks{D\'{e}partement de math\'{e}matiques et de g\'{e}nie industriel, Polytechnique Montr\'{e}al, Montreal, QC, Canada. Email: \url{christian.bingane@polymtl.ca}}}
\begin{document}
\maketitle
\begin{abstract}
A small polygon is a polygon of unit diameter. The maximal perimeter and the maximal width of a convex small polygon with $n=2^s$ vertices are not known when $s \ge 4$. In this paper, we construct a family of convex small $n$-gons, $n=2^s$ and $s\ge 3$, and show that the perimeters and the widths obtained cannot be improved for large $n$ by more than $a/n^6$ and $b/n^4$ respectively, for certain positive constants $a$ and $b$. In addition, assuming that a conjecture of Mossinghoff is true, we formulate the maximal perimeter problem as a nonlinear optimization problem involving trigonometric functions and, for $n=2^s$ with $3 \le s\le 7$, we provide global optimal solutions.
\end{abstract}
\paragraph{Keywords} Planar geometry, polygons, isodiametric problems, maximal perimeter, maximal width, global optimization

\section{Introduction}
The {\em diameter} of a polygon is the largest Euclidean distance between pairs of its vertices. A polygon is said to be {\em small} if its diameter equals one. For a given integer $n \ge 3$, the maximal perimeter problem consists in finding a convex small $n$-gon with the longest perimeter. The problem was first investigated by Reinhardt~\cite{reinhardt1922} in 1922, and later by Datta~\cite{datta1997} in 1997. They proved that
\begin{itemize}
	\item for all $n \ge 3$, the value $2n\sin \frac{\pi}{2n}$ is an upper bound on the perimeter of a convex small $n$-gon;
	\item when $n$ is odd, the regular small $n$-gon is an optimal solution, but it is unique only if $n$ is prime;
	\item when $n$ is even, the regular small $n$-gon is not optimal;
	\item when $n$ has an odd factor, there are finitely many optimal solutions~\cite{mossinghoff2011,hare2013,hare2019} and they are all equilateral.
\end{itemize}

When $n$ is a power of $2$, the maximal perimeter problem is solved for $n \le 8$. In 1987, Tamvakis~\cite{tamvakis1987} found the unique convex small $4$-gon with the longest perimeter, shown in Figure~\ref{figure:4gon:R3+}. In 2007, Audet, Hansen, and Messine~\cite{audet2007a} used both geometrical arguments and methods of global optimization to determine the unique convex small $8$-gon with the longest perimeter, illustrated in Figure~\ref{figure:8gon:V8}.

The diameter graph of a small polygon is the graph with the vertices of the polygon, and an edge between two vertices exists only if the distance between these vertices equals one. Figure~\ref{figure:4gon}, Figure~\ref{figure:6gon}, and Figure~\ref{figure:8gon} represent diameter graphs of some convex small polygons. The solid lines illustrate pairs of vertices which are unit distance apart. Mossinghoff~\cite{mossinghoff2006b} conjectured that, for $n\ge 4$ power of $2$, the diameter graph of a convex small $n$-gon with maximal perimeter has a cycle of length $n/2+1$, plus $n/2-1$ additional pendant edges, arranged so that all but two particular vertices of the cycle have a pendant edge. For example, Figure~\ref{figure:4gon:R3+} and Figure~\ref{figure:8gon:V8} exhibit the diameter graphs of optimal $n$-gons when $n=4$ and when $n=8$ respectively. We point out that numerical values in figures and tables in this paper are rounded at the last reported digit.

\begin{figure}
	\centering
	\subfloat[$(\geo{R}_4,2.8284,0.7071)$]{
		\begin{tikzpicture}[scale=4]
			\draw[dashed] (0,0) -- (0.5000,0.5000) -- (0,1) -- (-0.5000,0.5000) -- cycle;
			\draw (0,0) -- (0,1);
			\draw (0.5000,0.5000) -- (-0.5000,0.5000);
		\end{tikzpicture}
	}
	\subfloat[$(\geo{R}_3^+,3.0353,0.8660)$]{
		\begin{tikzpicture}[scale=4]
			\draw[dashed] (0.5000,0.8660) -- (0,1) -- (-0.5000,0.8660);
			\draw (0,1) -- (0,0) -- (0.5000,0.8660) -- (-0.5000,0.8660) -- (0,0);
		\end{tikzpicture}
		\label{figure:4gon:R3+}
	}
	\caption{Two convex small $4$-gons $(\geo{P}_4,L(\geo{P}_4)),W(\geo{P}_4))$}
	\label{figure:4gon}
\end{figure}
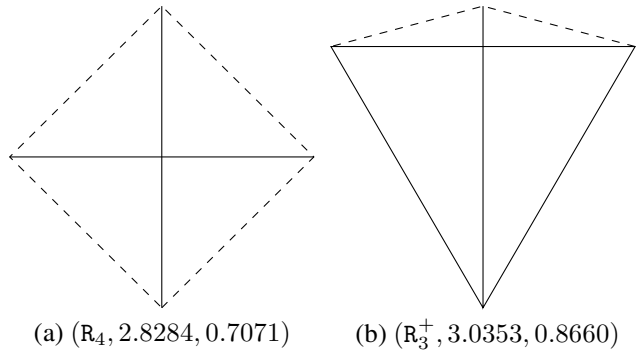

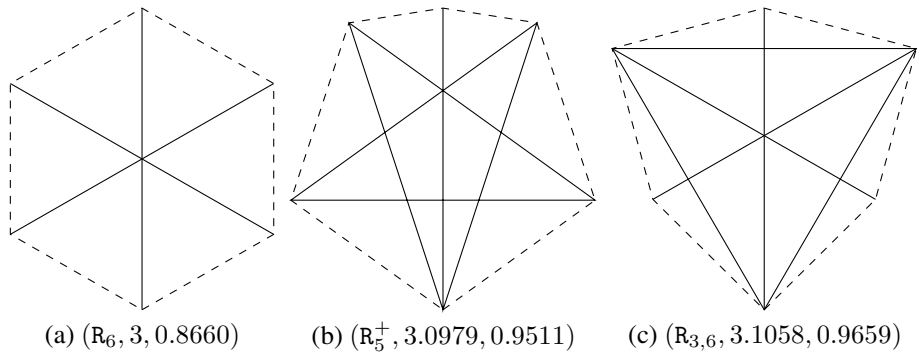
\begin{figure}
	\centering
	\subfloat[$(\geo{R}_6,3,0.8660)$]{
		\begin{tikzpicture}[scale=4]
			\draw[dashed] (0,0) -- (0.4330,0.2500) -- (0.4330,0.7500) -- (0,1) -- (-0.4330,0.7500) -- (-0.4330,0.2500) -- cycle;
			\draw (0,0) -- (0,1);
			\draw (0.4330,0.2500) -- (-0.4330,0.7500);
			\draw (0.4330,0.7500) -- (-0.4330,0.2500);
		\end{tikzpicture}
	}
	\subfloat[$(\geo{R}_5^+,3.0979,0.9511)$]{
		\begin{tikzpicture}[scale=4]
			\draw[dashed] (0,0) -- (0.5000,0.3633) -- (0.3090,0.9511) -- (0,1) -- (-0.3090,0.9511) -- (-0.5000,0.3633) -- cycle;
			\draw (0,1) -- (0,0) -- (0.3090,0.9511) -- (-0.5000,0.3633) -- (0.5000,0.3633) -- (-0.3090,0.9511) -- (0,0);
		\end{tikzpicture}
	}
	\subfloat[$(\geo{R}_{3,6},3.1058,0.9659)$]{
		\begin{tikzpicture}[scale=4]
			\draw[dashed] (0,0) -- (0.3660,0.3660) -- (0.5000,0.8660) -- (0,1) -- (-0.5000,0.8660) -- (-0.3660,0.3660) -- cycle;
			\draw (0,0) -- (0.5000,0.8660) -- (-0.5000,0.8660) -- cycle;
			\draw (0,0) -- (0,1);
			\draw (0.3660,0.3660) -- (-0.5000,0.8660);
			\draw (0.5000,0.8660) -- (-0.3660,0.3660);
		\end{tikzpicture}
		\label{figure:6gon:R36}
	}
	\caption{Three convex small $6$-gons $(\geo{P}_6,L(\geo{P}_6)),W(\geo{P}_6))$}
	\label{figure:6gon}
\end{figure}

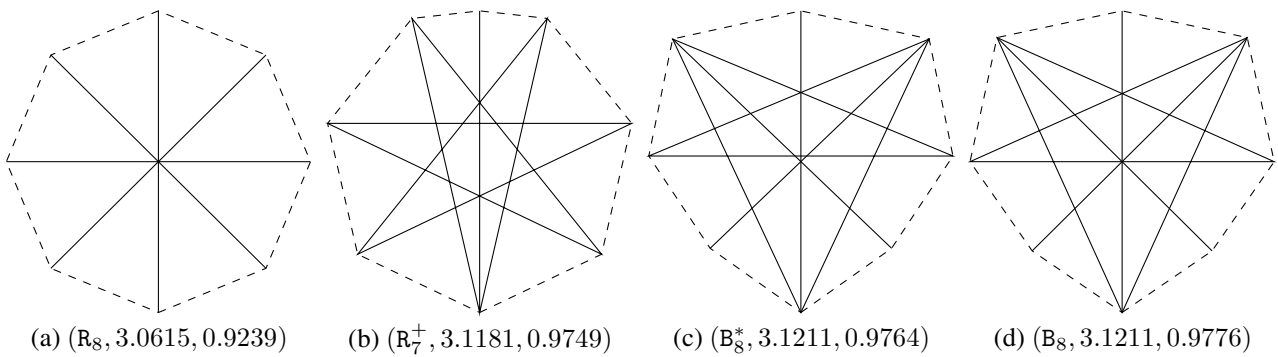
\begin{figure}
	\centering
	\subfloat[$(\geo{R}_8,3.0615,0.9239)$]{
		\begin{tikzpicture}[scale=4]
			\draw[dashed] (0,0) -- (0.3536,0.1464) -- (0.5000,0.5000) -- (0.3536,0.8536) -- (0,1) -- (-0.3536,0.8536) -- (-0.5000,0.5000) -- (-0.3536,0.1464) -- cycle;
			\draw (0,0) -- (0,1);
			\draw (0.3536,0.1464) -- (-0.3536,0.8536);
			\draw (0.5000,0.5000) -- (-0.5000,0.5000);
			\draw (0.3536,0.8536) -- (-0.3536,0.1464);
		\end{tikzpicture}
	}
	\subfloat[$(\geo{R}_7^+,3.1181,0.9749)$]{
		\begin{tikzpicture}[scale=4]
			\draw[dashed] (0,0) -- (0.4010,0.1931) -- (0.5000,0.6270) -- (0.2225,0.9749) -- (0,1) -- (-0.2225,0.9749) -- (-0.5000,0.6270) -- (-0.4010,0.1931) -- cycle;
			\draw (0,1) -- (0,0) -- (0.2225,0.9749) -- (-0.4010,0.1931) -- (0.5000,0.6270) -- (-0.5000,0.6270) -- (0.4010,0.1931) -- (-0.2225,0.9749) -- (0,0);
		\end{tikzpicture}
	}
	\subfloat[$(\geo{B}_8^*,3.1211,0.9764)$]{
		\begin{tikzpicture}[scale=4]
			\draw[dashed] (0,0) -- (0.2983,0.2128) -- (0.5000,0.5188) -- (0.4217,0.9067) -- (0,1) -- (-0.4217,0.9067) -- (-0.5000,0.5188) -- (-0.2983,0.2128) -- cycle;
			\draw (0,0) -- (0,1);
			\draw (0,0) -- (0.4217,0.9067) -- (-0.5000,0.5188) -- (0.5000,0.5188)-- (-0.4217,0.9067) -- cycle;
			\draw (0.4217,0.9067) -- (-0.2983,0.2128);\draw (-0.4217,0.9067) -- (0.2983,0.2128);
		\end{tikzpicture}
		\label{figure:8gon:V8}
	}
	\subfloat[$(\geo{B}_8,3.1211,0.9776)$]{
		\begin{tikzpicture}[scale=4]
			\draw[dashed] (0,0) -- (0.2957,0.2043) -- (0.5000,0.5000) -- (0.4114,0.9114) -- (0,1) -- (-0.4114,0.9114) -- (-0.5000,0.5000) -- (-0.2957,0.2043) -- cycle;
			\draw (0,0) -- (0.4114,0.9114) -- (-0.5000,0.5000) -- (0.5000,0.5000) -- (-0.4114,0.9114) -- cycle;
			\draw (0,0) -- (0,1);
			\draw (0.4114,0.9114) -- (-0.2957,0.2043);\draw (-0.4114,0.9114) -- (0.2957,0.2043);
		\end{tikzpicture}
		\label{figure:8gon:B8}
	}
	\caption{Four convex small $8$-gons $(\geo{P}_8,L(\geo{P}_8)),W(\geo{P}_8))$}
	\label{figure:8gon}
\end{figure}

The {\em width}­ of a polygon in some direction is the distance between two parallel lines perpendicular to this direction and supporting the polygon from below and above. The width of a polygon is the minimum width for all directions. For a given integer $n \ge 3$, the maximal width problem consists in finding a convex small $n$-gon with the largest width. This problem was partially solved by Bezdek and Fodor~\cite{bezdek2000} in~2000. They proved that
\begin{itemize}
	\item for all $n \ge 3$, the value $\cos \frac{\pi}{2n}$ is an upper bound on the width of a convex small $n$-gon;
	\item when $n$ has an odd factor, a convex small $n$-gon is optimal for the maximal width problem if and only if it is optimal for the maximal perimeter problem;
	\item when $n = 4$, there are infinitely many optimal convex small $4$-gons, including the $4$-gon illustrated in Figure~\ref{figure:4gon:R3+}.
\end{itemize}

When $n\ge 8$ is a power of $2$, the maximal width is only known for the first open case $n = 8$. In 2013, Audet, Hansen, Messine, and Ninin~\cite{audet2013} combined geometrical and analytical reasoning as well as methods of global optimization to prove that there are infinitely many optimal convex small $8$-gons, including the $8$-gon illustrated in Figure~\ref{figure:8gon:B8}.

For $n=2^s$ with integer $s\ge 4$, exact solutions in both problems appear to be presently out of reach. However, tight lower bounds on the maximal perimeter and the maximal width may be obtained analytically. For instance, Mossinghoff~\cite{mossinghoff2006b} constructed convex small $n$-gons, for $n=2^s$ with $s\ge 3$, and proved that the perimeters obtained cannot be improved for large~$n$ by more than~$\pi^5/(16n^5)$. We can also show that, when $n=2^s$ with $s\ge 2$, the value $\cos \frac{\pi}{2n-2}$ is a lower bound on the maximal width and this bound cannot be improved for large~$n$ by more than~$\pi^2/(4n^3)$. In this paper, we propose tighter lower bounds on both the maximal perimeter and the maximal width of convex small $n$-gons when $n=2^s$ and integer $s \ge 3$. Thus, the main result of this paper is the following:

\begin{theorem}\label{thm:Bn}
	For a given integer $n \ge 3$, let $\ub{L}_n := 2n \sin \frac{\pi}{2n}$ denote an upper bound on the perimeter $L(\geo{P}_n)$ of a convex small $n$-gon $\geo{P}_n$, and $\ub{W}_n := \cos \frac{\pi}{2n}$ denote an upper bound on its width $W(\geo{P}_n)$. If $n = 2^s$ with $s\ge 3$, then there exists a convex small $n$-gon $\geo{B}_n$ such that
	\[
	\begin{aligned}
		L(\geo{B}_n) &= 2n \sin \frac{\pi}{2n} \cos \left(\frac{\pi}{2n}-\frac{1}{2}\arcsin\left(\frac{1}{2}\sin \frac{2\pi}{n}\right)\right),\\
		W(\geo{B}_n) &= \cos \left(\frac{\pi}{n} - \frac{1}{2}\arcsin\left(\frac{1}{2}\sin \frac{2\pi}{n}\right)\right),
	\end{aligned}
	\]
	and
	\[
	\begin{aligned}
		\ub{L}_n - L(\geo{B}_n) &= \frac{\pi^7}{32n^6} + O\left(\frac{1}{n^8}\right),\\
		\ub{W}_n - W(\geo{B}_n) &= \frac{\pi^4}{8n^4} + O\left(\frac{1}{n^6}\right).
	\end{aligned}
	\]
\end{theorem}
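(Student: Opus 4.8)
The plan is to exhibit $\geo{B}_n$ explicitly and then read off $L(\geo{B}_n)$, $W(\geo{B}_n)$ and their asymptotics by direct computation. I would build the polygon from the diameter-graph structure conjectured by Mossinghoff: for $n=2^s$ the graph is a single cycle of odd length $n/2+1$ together with $n/2-1$ pendant edges. Concretely, I would place the polygon with a vertical axis of symmetry, fix the two extreme vertices $v_0=(0,0)$ and $v_{n/2}=(0,1)$ so that $v_0v_{n/2}$ is a unit diameter, and distribute the remaining vertices symmetrically according to a family of turning angles. Writing $\psi:=\arcsin\!\left(\tfrac12\sin\tfrac{2\pi}{n}\right)$, I would arrange the vertices so that the $n$ edges split into two symmetric classes, $n/2$ edges of length $2\sin\!\left(\tfrac{\pi}{n}-\tfrac{\psi}{2}\right)$ and $n/2$ edges of length $2\sin\tfrac{\psi}{2}$, with every prescribed unit-distance pair (the edges of the diameter graph) realized exactly. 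The defining relation $\sin\psi=\tfrac12\sin\tfrac{2\pi}{n}$ is the condition under which these unit-distance pairs can all be realized simultaneously by a convex polygon of diameter one.

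The first substantive step is to verify that this configuration really is a convex small polygon for every $s\ge 3$: that all interior angles are less than $\pi$, that each edge of the diameter graph has length exactly $1$, and --- the genuinely delicate point --- that no other pair of vertices is farther apart than $1$, so that the diameter equals $1$ and is attained. I expect this verification to be the main obstacle, since a priori it involves all $\binom{n}{2}$ pairwise distances uniformly in $n$; the symmetry of the construction and the monotonicity of the vertex angles should reduce it to a handful of representative pairs, with the choice of $\psi$ guaranteeing that the maximal distance is exactly the unit diameter.

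Once $\geo{B}_n$ is in hand, the two geometric quantities follow by trigonometry. Summing the two edge classes gives
\[
L(\geo{B}_n)=n\left[\sin\!\left(\tfrac{\pi}{n}-\tfrac{\psi}{2}\right)+\sin\tfrac{\psi}{2}\right]=2n\sin\tfrac{\pi}{2n}\cos\!\left(\tfrac{\pi}{2n}-\tfrac{\psi}{2}\right),
\]
the second equality being the product-to-sum identity. For the width I would use the symmetry and the structure of the diameter graph to identify the minimum-width supporting direction explicitly, and then evaluate the distance between the two parallel supporting lines in that direction, which collapses to $\cos\!\left(\tfrac{\pi}{n}-\tfrac{\psi}{2}\right)$. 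Both closed forms then match the statement.

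Finally, the asymptotics come from Taylor expansion. Expanding $\sin\tfrac{2\pi}{n}$ and then $\arcsin$ gives $\psi=\tfrac{\pi}{n}-\tfrac{\pi^3}{2n^3}+O(n^{-5})$, hence $\tfrac{\pi}{2n}-\tfrac{\psi}{2}=\tfrac{\pi^3}{4n^3}+O(n^{-5})$. Substituting into $\ub{L}_n-L(\geo{B}_n)=2n\sin\tfrac{\pi}{2n}\bigl[1-\cos(\tfrac{\pi}{2n}-\tfrac{\psi}{2})\bigr]$ and using $1-\cos x=\tfrac{x^2}{2}+O(x^4)$ together with $2n\sin\tfrac{\pi}{2n}=\pi+O(n^{-2})$ yields $\tfrac{\pi^7}{32n^6}+O(n^{-8})$. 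For the width, $\tfrac{\pi}{n}-\tfrac{\psi}{2}=\tfrac{\pi}{2n}+\tfrac{\pi^3}{4n^3}+O(n^{-5})$, and expanding $\ub{W}_n-W(\geo{B}_n)=\cos\tfrac{\pi}{2n}-\cos(\tfrac{\pi}{n}-\tfrac{\psi}{2})$ shows the leading contribution is the cross term $\sin\tfrac{\pi}{2n}\cdot\tfrac{\pi^3}{4n^3}$, giving $\tfrac{\pi^4}{8n^4}+O(n^{-6})$. The only care needed here is to retain enough terms in the expansion of $\psi$ to certify the stated error orders.
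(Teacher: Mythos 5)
Your proposal follows essentially the same route as the paper: build the polygon on Mossinghoff's cycle-plus-pendant-edges diameter graph with a vertical axis of symmetry and alternating turning angles, fix the free parameter by the closure condition yielding $\psi=\arcsin\bigl(\tfrac12\sin\tfrac{2\pi}{n}\bigr)$ (the paper writes this as $x_{n/4}=-1/2$, giving $\beta=\tfrac{\pi}{n}-\psi$), then read off $L$ and $W$ from the two edge classes and Taylor-expand; your edge lengths, closed forms, and asymptotic computations all match. The convexity/smallness verification you flag as the delicate point is likewise only asserted ``by construction'' in the paper, so your treatment is at the same level of rigor as the original.
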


The remainder of this paper is organized as follows. Section~\ref{sec:ngon} recalls principal results on the maximal perimeter and the maximal width of convex small polygons. We prove Theorem~\ref{thm:Bn} in Section~\ref{sec:Bn}. Tight bounds on the maximal width of unit-perimeter $n$-gons, $n=2^s$ and $s\ge 3$, are deduced from Theorem~\ref{thm:Bn} in Section~\ref{sec:wn}. Under the assumption that Mossinghoff's conjecture is true, a nonlinear optimization problem involving trigonometric functions is proposed for the maximal perimeter problem in Section~\ref{sec:nlo}. Global optimal solutions obtained by using AMPL with the solver Couenne~\cite{belotti2009} are given for $n= 2^s$ with $3\le s \le 7$. Section~\ref{sec:conclusion} concludes the paper.

\section{Perimeters and widths of convex small polygons}\label{sec:ngon}
\subsection{Maximal perimeter and maximal width}
Let $L(\geo{P})$ denote the perimeter of a polygon $\geo{P}$ and $W(\geo{P})$ its width. For a given integer $n\ge 3$, let $\geo{R}_n$ denote the regular small $n$-gon. We have
\[
L(\geo{R}_n) =
\begin{cases}
	2n\sin \frac{\pi}{2n} &\text{if $n$ is odd,}\\
	n\sin \frac{\pi}{n} &\text{if $n$ is even,}\\
\end{cases}
\]
and
\[
W(\geo{R}_n) =
\begin{cases}
	\cos \frac{\pi}{2n} &\text{if $n$ is odd,}\\
	\cos \frac{\pi}{n} &\text{if $n$ is even.}\\
\end{cases}
\]
We remark that $L(\geo{R}_n) < L(\geo{R}_{n-1})$~\cite{audet2009a} and $W(\geo{R}_n) < W(\geo{R}_{n-1})$ for all even $n\ge 4$. The polygon $\geo{R}_n$ does not have maximum perimeter nor maximum width for any even $n\ge 4$. Indeed, when $n$ is even, one can construct a convex small $n$-gon with a longer perimeter and a larger width than $\geo{R}_n$ by adding a vertex at distance $1$ along the mediatrix of an angle in $\geo{R}_{n-1}$. We denote this $n$-gon by $\geo{R}_{n-1}^+$ and we have
\[
\begin{aligned}
	L(\geo{R}_{n-1}^+) &= (2n-2)\sin \frac{\pi}{2n-2} + 4 \sin \frac{\pi}{4n-4} - 2\sin \frac{\pi}{2n-2},\\
	W(\geo{R}_{n-1}^+) &= \cos \frac{\pi}{2n-2}.
\end{aligned}
\]

When $n$ has an odd factor $m$, we construct another family of convex equilateral small $n$-gons as follows:
\begin{enumerate}
	\item Consider a regular small $m$-gon $\geo{R}_m$;
	\item Transform $\geo{R}_m$ into a Reuleaux $m$-gon by replacing each edge by a circle's arc passing through its end vertices and centered at the opposite vertex;
	\item Add at regular intervals $n/m-1$ vertices within each arc;
	\item Take the convex hull of all vertices.
\end{enumerate}
We denote these $n$-gons by $\geo{R}_{m,n}$ and we have
\[
\begin{aligned}
	L(\geo{R}_{m,n}) &= 2n\sin \frac{\pi}{2n},\\
	W(\geo{R}_{m,n}) &= \cos \frac{\pi}{2n}.
\end{aligned}
\]
The $6$-gon $\geo{R}_{3,6}$ is illustrated in Figure~\ref{figure:6gon:R36}.

\begin{theorem}[Reinhardt~\cite{reinhardt1922}, Datta~\cite{datta1997}]\label{thm:perimeter}
	For all $n \ge 3$, let $L_n^*$ denote the maximal perimeter among all convex small $n$-gons and let $\ub{L}_n := 2n \sin \frac{\pi}{2n}$.
	\begin{itemize}
		\item When $n$ has an odd factor $m$, $L_n^* = \ub{L}_n$ is achieved by finitely many equilateral $n$-gons~\cite{mossinghoff2011,hare2013,hare2019}, including~$\geo{R}_{m,n}$. The optimal $n$-gon $\geo{R}_{m,n}$ is unique if $m$ is prime and $n/m \le 2$.
		\item When $n=2^s$ with integer $s\ge 2$, $L(\geo{R}_n) < L_n^* < \ub{L}_n$.
	\end{itemize}
\end{theorem}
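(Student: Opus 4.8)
The plan is to separate the statement into the universal upper bound $L(\geo{P}_n)\le\ub{L}_n$, its attainment and rigidity when $n$ has an odd factor, and the two strict inequalities when $n=2^s$.

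First I would establish the upper bound by Cauchy's formula, which writes the perimeter of a convex body as the integral of its width over directions:
\[
L(\geo{P}_n)=\int_0^\pi w(\theta)\,d\theta ,
\]
where $w(\theta)$ is the width of $\geo{P}_n$ in direction $\theta$ and the integrand has period $\pi$. For a convex $n$-gon, $w(\theta)=|v_{a}-v_{b}|\cos(\theta-\psi)$ on each maximal interval $I$ of directions where a fixed antipodal pair $(v_a,v_b)$ supports the polygon, $\psi$ being the direction of $v_a-v_b$. A rotating-calipers count over one period $[0,\pi)$ shows that the supporting pair changes once per edge, giving intervals $I_1,\dots,I_K$ with $K\le n$ and $\sum_k|I_k|=\pi$. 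On $I_k$ the diameter constraint gives $|v_{a_k}-v_{b_k}|\le 1$, whence
\[
\int_{I_k}w\,d\theta\le\int_{I_k}\cos(\theta-\psi_k)\,d\theta=2\cos(\mu_k-\psi_k)\sin\frac{|I_k|}{2}\le 2\sin\frac{|I_k|}{2},
\]
with $\mu_k$ the midpoint of $I_k$. Summing and applying Jensen's inequality to the concave sine on $[0,\pi/2]$ with $\sum_k|I_k|=\pi$ gives $L(\geo{P}_n)\le 2K\sin\frac{\pi}{2K}$, and since $K\mapsto 2K\sin\frac{\pi}{2K}$ increases and $K\le n$ we obtain $L(\geo{P}_n)\le 2n\sin\frac{\pi}{2n}=\ub{L}_n$.

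Next, for attainment when $n$ has an odd factor $m$, the construction $\geo{R}_{m,n}$ recalled above already satisfies $L(\geo{R}_{m,n})=\ub{L}_n$, so the sandwich $\ub{L}_n=L(\geo{R}_{m,n})\le L_n^*\le\ub{L}_n$ forces $L_n^*=\ub{L}_n$. Equality in the chain above is rigid: it requires $K=n$ intervals of equal width $\pi/n$, each supported by a diameter of length exactly $1$ and centered at its midpoint ($\mu_k=\psi_k$). Unwinding this rigidity shows that every optimal polygon is equilateral and is a Reinhardt polygon, whose $n$ unit diameters close up into a star encoded by a periodic cyclic $\{0,1\}$-sequence; the finiteness of such configurations, and the uniqueness of $\geo{R}_{m,n}$ when $m$ is prime and $n/m\le 2$, is precisely the classification of \cite{mossinghoff2011,hare2013,hare2019}, which I would cite rather than reprove.

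Finally, for $n=2^s$ I would prove the two strict inequalities. The lower one, $L(\geo{R}_n)<L_n^*$, follows by exhibiting an admissible polygon that beats the regular one: adding a vertex to the odd-optimal $\geo{R}_{n-1}$ yields $\geo{R}_{n-1}^+$ with $L(\geo{R}_n)<L(\geo{R}_{n-1}^+)\le L_n^*$, a direct comparison of the two explicit trigonometric expressions recalled above. The upper one, $L_n^*<\ub{L}_n$, is where the power-of-two hypothesis is used: by the rigidity of the equality case, $L_n^*=\ub{L}_n$ would force the existence of a Reinhardt $n$-gon, hence of an admissible periodic $\{0,1\}$-sequence; such sequences exist exactly when $n$ admits an odd factor and none exists when $n=2^s$, so the bound cannot be attained and the inequality is strict.

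\textbf{Main obstacle.} The upper-bound estimate is routine once Cauchy's formula and the rotating-calipers count are available. The delicate part is the equality analysis: turning ``equality throughout the Cauchy estimate'' into the combinatorial description of Reinhardt polygons, and then the number-theoretic fact that these exist if and only if $n$ is not a power of two. For the full finiteness and nonexistence statements I would rely on the cited classification.
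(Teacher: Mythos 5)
This theorem is not proved in the paper at all: it is recalled as a classical result of Reinhardt and Datta, with the finiteness and uniqueness assertions delegated to the cited classification literature, so the comparison here is between your reconstruction and those sources rather than an in-paper argument. Your route --- Cauchy's formula $L(\geo{P}_n)=\int_0^\pi w(\theta)\,d\theta$, a rotating-calipers decomposition of $[0,\pi)$ into $K\le n$ intervals on each of which $w(\theta)=|v_a-v_b|\cos(\theta-\psi)$ with $|v_a-v_b|\le 1$, then Jensen for the concave sine and monotonicity of $K\mapsto 2K\sin\frac{\pi}{2K}$ --- is sound and close in spirit to Datta's integral-geometric proof of the upper bound; the count $K\le n$ is correct because each of the $n$ edges triggers exactly one caliper transition per half-turn (parallel edges only decrease $K$). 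Attainment via $\geo{R}_{m,n}$ and the chain $L(\geo{R}_n)<L(\geo{R}_{n-1})<L(\geo{R}_{n-1}^+)\le L_n^*$ matches facts the paper itself records in Section~2. Two points deserve flagging. First, your strictness argument for $n=2^s$ tacitly needs $L_n^*$ to be attained before rigidity can be invoked; this is a routine compactness argument (degenerate limits are $m$-gons with $m<n$ and perimeter at most $\ub{L}_m<\ub{L}_n$), but it should be stated. Second, the real content of the equality case --- upgrading ``$K=n$, all intervals of length $\pi/n$, each unit diameter bisecting its interval'' to the statement that optimal polygons are equilateral Reinhardt polygons, together with the parity obstruction showing no such polygon exists when $n=2^s$ --- is precisely what Reinhardt established and what you (legitimately, as the statement itself does) outsource to \cite{mossinghoff2011,hare2013,hare2019}; so at the step you yourself identify as the main obstacle, your proposal is an honest and correctly structured sketch rather than a complete argument, which is consistent with how the paper treats this theorem.
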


When $n=2^s$, the maximal perimeter $L_n^*$ is only known for $s \le 3$. Tamvakis~\cite{tamvakis1987} found that $L_4^* = 2+\sqrt{6}-\sqrt{2}$, and this value is achieved only by $\geo{R}_3^+$, shown in Figure~\ref{figure:4gon:R3+}. Audet, Hansen, and Messine~\cite{audet2007a} found that $L_8^* \approx 3.121147$, and this value is only achieved by $\geo{B}_8^*$, shown in Figure~\ref{figure:8gon:V8}.

\begin{theorem}[Bezdek and Fodor~\cite{bezdek2000}]\label{thm:width}
	For all $n \ge 3$, let $W_n^*$ denote the maximal width among all convex small $n$-gons and let $\ub{W}_n := \cos \frac{\pi}{2n}$.
	\begin{itemize}
		\item When $n$ has an odd factor, $W_n^* = \ub{W}_n$ is achieved by a convex small $n$-gon with maximal perimeter $L_n^* = \ub{L}_n$.
		\item When $n=2^s$ with integer $s\ge 2$, $W(\geo{R}_n) < W_n^* < \ub{W}_n$.
	\end{itemize}
\end{theorem}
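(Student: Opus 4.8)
The plan is to establish the sharp upper bound $W(\geo{P}) \le \cos\frac{\pi}{2n}$ for every convex small $n$-gon, to exhibit an explicit optimizer when $n$ has an odd factor, and then to treat the power-of-two case by a lower-bound construction together with an equality analysis of the upper bound. For the upper bound, set $w := W(\geo{P})$ and $\phi := \arccos w \in (0,\pi/2)$, and orient $\geo{P}$ so the width is attained vertically: a bottom vertex $A$ on the support line $y=0$ and a contact point $B$ on $y=w$, with $AB$ the vertical width segment. The engine is a per-edge angular lemma. For each edge $e_i$, the width in the direction of its outward normal is at least $w$, so the antipodal vertex $a_i$ lies at distance at least $w$ from the line carrying $e_i$. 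Since $\geo{P}$ has diameter $1$, each segment from $a_i$ to an endpoint of $e_i$ has length at most $1$ while its projection onto the normal is at least $w=\cos\phi$; hence that segment makes an angle at most $\phi$ with the normal, and consequently $e_i$ subtends an angle at most $2\phi$ at $a_i$. I would then run a rotating-calipers accounting: as the caliper direction sweeps a half turn, the antipodal contact traverses the boundary and the subtended angles of the successive edges sum to at least $\pi$. Combining this with the per-edge bound over the $n$ edges gives $2n\phi \ge \pi$, i.e. $w \le \cos\frac{\pi}{2n} = \ub{W}_n$.

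When $n$ has an odd factor $m$, the polygon $\geo{R}_{m,n}$ constructed in Section~\ref{sec:ngon} satisfies $W(\geo{R}_{m,n}) = \cos\frac{\pi}{2n} = \ub{W}_n$ and $L(\geo{R}_{m,n}) = 2n\sin\frac{\pi}{2n} = \ub{L}_n$. By the upper bound just proved it is width-optimal, and by Theorem~\ref{thm:perimeter} it is perimeter-optimal; thus $W_n^* = \ub{W}_n$ is attained by a convex small $n$-gon of maximal perimeter, which is the first assertion.

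For $n = 2^s$ with $s \ge 2$, the lower bound is immediate from the polygon $\geo{R}_{n-1}^+$: since $n>2$ one has $\tfrac{\pi}{2n-2} < \tfrac{\pi}{n}$, whence $W_n^* \ge W(\geo{R}_{n-1}^+) = \cos\frac{\pi}{2n-2} > \cos\frac{\pi}{n} = W(\geo{R}_n)$. For the strict upper bound I would analyze the equality case of the argument above: $w = \cos\frac{\pi}{2n}$ forces every edge to subtend exactly $\pi/n$ at its antipode and every antipodal segment to have length exactly $1$, so $\geo{P}$ is equilateral and its unit-distance pairs form a Reinhardt configuration (an odd cycle with pendant edges) whose existence requires $n$ to have an odd factor. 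Equivalently, equality would force $L(\geo{P}) = \ub{L}_n$; but Theorem~\ref{thm:perimeter} states $L_n^* < \ub{L}_n$ when $n = 2^s$, a contradiction. Hence $W_n^* < \ub{W}_n$.

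The main obstacle is the global step in the upper bound, namely showing that the subtended angles of the $n$ edges accumulate to at least $\pi$ as the calipers rotate, since the antipodal map may send several consecutive edges to the same vertex and the accounting must handle vertex–vertex as well as vertex–edge antipodal contacts. The secondary difficulty is making the equality analysis rigorous, in particular deducing the full Reinhardt structure, and hence the odd factor, from the simultaneous tightness of the diameter and width constraints.
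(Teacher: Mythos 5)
The paper does not prove this theorem: it is quoted as background, with the proof residing in the cited work of Bezdek and Fodor, so your attempt can only be judged on its own merits rather than against an in-paper argument. On those merits, most of your architecture is sound. The per-edge lemma is correct: the farthest point of the polygon from the line carrying $e_i$ is a vertex $a_i$ at distance at least $w$ from that line, and since each segment from $a_i$ to an endpoint of $e_i$ has length at most $1$ and normal projection at least $w=\cos\phi$, it makes angle at most $\phi$ with the normal, giving $\theta_i \le 2\phi$. The attainment bullet via $\geo{R}_{m,n}$ (which has $W=\ub{W}_n$ and $L=\ub{L}_n$) matches the paper's own construction; the lower bound $W_n^* \ge W(\geo{R}_{n-1}^+) = \cos\frac{\pi}{2n-2} > \cos\frac{\pi}{n} = W(\geo{R}_n)$ is correct for $n\ge 4$; and your second, ``equivalently'' route for the strict inequality is the right one: equality $w=\ub{W}_n$ together with $\sum_i \theta_i \ge \pi$ and $\theta_i \le \pi/n$ forces every $\theta_i = \pi/n$ with both antipodal segments of length exactly $1$, hence $|e_i| = 2\sin\frac{\pi}{2n}$ for every edge and $L(\geo{P}) = 2n\sin\frac{\pi}{2n} = \ub{L}_n$, contradicting $L_n^* < \ub{L}_n$ from Theorem~\ref{thm:perimeter}. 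Note that this reduction spares you the full Reinhardt rigidity analysis you list as a secondary difficulty; you do not need to reconstruct the odd cycle with pendant edges at all, only the equilateral-with-unit-antipodes conclusion.

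The genuine gap is exactly the one you flag: the global claim $\sum_i \theta_i \ge \pi$ is asserted, not proved, and it is the load-bearing step of both the inequality and its equality analysis. The claim is true --- indeed, with $a_i$ chosen as a support vertex in the direction opposite the outward normal of $e_i$, one generically has the identity $\sum_i \theta_i = \pi$ (sanity check: for a triangle the $a_i$ are the opposite vertices and the $\theta_i$ are the interior angles), and the same accounting, used in the direction $\sum_i \theta_i \le \pi$ together with $|e_i| \le 2\sin(\theta_i/2)$ and concavity of the sine, is what underlies the Reinhardt--Datta bound $\ub{L}_n$ in Theorem~\ref{thm:perimeter}. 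But a proof requires the monotone bookkeeping of support pairs as the direction rotates, with ties and parallel edges (edge--edge antipodal contacts, several consecutive edges sharing one antipodal vertex) handled carefully; naive versions fail, since the angular cones at successive apexes $a_i$ do not concatenate contiguously (for the regular pentagon the five cones of width $\pi/5$ are separated by gaps, yet still sum to $\pi$). As submitted, your argument is a correct plan whose crux is an unproven lemma, so it does not yet constitute a proof; filling in that lemma, in the $\ge \pi$ direction and with the degenerate contact cases resolved, is the substance of what Bezdek and Fodor actually establish.
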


When $n = 2^s$, the maximal width $W_n^*$ is only known for $s \le 3$. Bezdek and Fodor~\cite{bezdek2000} showed that $W_4^* = \frac{1}{2}\sqrt{3}$, and this value is achieved by infinitely many convex small $4$-gons, including~$\geo{R}_3^+$ shown in Figure~\ref{figure:4gon:R3+}. Audet, Hansen, Messine, and Ninin found that $W_8^* = \frac{1}{4}\sqrt{10+2\sqrt{7}}$, and this value is also achieved by infinitely many convex small $8$-gons, including~$\geo{B}_8$ shown in Figure~\ref{figure:8gon:B8}. It is interesting to note that while the optimal $4$-gon for the maximal perimeter problem is also optimal for the maximal width problem, the optimal $8$-gon for the maximal perimeter problem is not optimal for the maximal width problem.

\subsection{Lower bounds on the maximal perimeter and the maximal width}
For $n=2^s$ with integer $s\ge 2$, let $\geo{T}_n$ denote the convex $n$-gon obtained by subdividing each bounding arc of a such Reuleaux triangle into either $\lceil n/3 \rceil$ or $\lfloor n/3 \rfloor$ subarcs of equal length, then taking the convex hull of the endpoints of these arcs. For a real number $a$, $\lceil a \rceil$ is the least integer greater than or equal to $a$, and $\lfloor a \rfloor$ is the greatest integer less than or equal to $a$. We illustrate $\geo{T}_n$ for some $n$ in Figure~\ref{figure:Tn}. For each $n$, the perimeter of~$\geo{T}_n$ is given by
\[
L(\geo{T}_n) =
\begin{cases}
	\frac{4n-4}{3}\sin \frac{\pi}{2n-2} + \frac{2n+4}{3}\sin \frac{\pi}{2n+4} &\text{if $n = 3k+1$,}\\
	\frac{4n+4}{3}\sin \frac{\pi}{2n+2} + \frac{2n-4}{3}\sin \frac{\pi}{2n-4} &\text{if $n = 3k+2$.}\\
\end{cases}
\]

\begin{figure}
	\centering
	\subfloat[$(\geo{T}_8,3.1191,0.9659)$]{
		\begin{tikzpicture}[scale=4]
			\draw[dashed] (0,0) -- (0.2660,0.2232) -- (0.4397,0.5240) -- (0.5000,0.8660) -- (0,1) -- (-0.5000,0.8660) -- (-0.4397,0.5240) -- (-0.2660,0.2232) -- cycle;
			\draw[blue,thick] (0,0) -- (0.5000,0.8660) -- (-0.5000,0.8660) -- cycle;
			\draw[red,thick] (0,0) -- (0,1);
			\draw (0.5000,0.8660) -- (-0.4397,0.5240);\draw (0.5000,0.8660) -- (-0.2660,0.2232);
			\draw (-0.5000,0.8660) -- (0.4397,0.5240);\draw (-0.5000,0.8660) -- (0.2660,0.2232);
		\end{tikzpicture}
	}
	\subfloat[$(\geo{T}_{16},3.1364,0.9945)$]{
		\begin{tikzpicture}[scale=4]
			\draw[dashed] (0,0) -- (0.1691,0.1229) -- (0.3090,0.2782) -- (0.4135,0.4593) -- (0.4781,0.6581) -- (0.5000,0.8660) -- (0.3420,0.9397) -- (0.1736,0.9848) -- (0,1) -- (-0.1736,0.9848) -- (-0.3420,0.9397) -- (-0.5000,0.8660) -- (-0.4781,0.6581) -- (-0.4135,0.4593) -- (-0.3090,0.2782) -- (-0.1691,0.1229) -- cycle;
			\draw[blue,thick] (0,0) -- (0.5000,0.8660) -- (-0.5000,0.8660) -- cycle;
			\draw[red,thick] (0,0) -- (0,1);
			\draw (0,0) -- (0.3420,0.9397);\draw (0,0) -- (-0.3420,0.9397);
			\draw (0,0) -- (0.1736,0.9848);\draw (0,0) -- (-0.1736,0.9848);
			\draw (0.5000,0.8660) -- (-0.4781,0.6581);\draw (-0.5000,0.8660) -- (0.4781,0.6581);
			\draw (0.5000,0.8660) -- (-0.4135,0.4593);\draw (-0.5000,0.8660) -- (0.4135,0.4593);
			\draw (0.5000,0.8660) -- (-0.3090,0.2782);\draw (-0.5000,0.8660) -- (0.3090,0.2782);
			\draw (0.5000,0.8660) -- (-0.1691,0.1229);\draw (-0.5000,0.8660) -- (0.1691,0.1229);
		\end{tikzpicture}
	}
	\subfloat[$(\geo{T}_{32},3.1403,0.9986)$]{
		\begin{tikzpicture}[scale=4]
			\draw[dashed] (0,0) -- (0.0801,0.0514) -- (0.1549,0.1103) -- (0.2237,0.1759) -- (0.2861,0.2479) -- (0.3413,0.3254) -- (0.3888,0.4078) -- (0.4284,0.4944) -- (0.4595,0.5843) -- (0.4819,0.6768) -- (0.4955,0.7710) -- (0.5000,0.8660) -- (0.4067,0.9135) -- (0.3090,0.9511) -- (0.2079,0.9781) -- (0.1045,0.9945) -- (0,1) -- (-0.1045,0.9945) -- (-0.2079,0.9781) -- (-0.3090,0.9511) -- (-0.4067,0.9135) -- (-0.5000,0.8660) -- (-0.4955,0.7710) -- (-0.4819,0.6768) -- (-0.4595,0.5843) -- (-0.4284,0.4944) -- (-0.3888,0.4078) -- (-0.3413,0.3254) -- (-0.2861,0.2479) -- (-0.2237,0.1759) -- (-0.1549,0.1103) -- (-0.0801,0.0514) -- cycle;
			\draw[blue,thick] (0,0) -- (0.5000,0.8660) -- (-0.5000,0.8660) -- cycle;
			\draw[red,thick] (0,0) -- (0,1);
			\draw (0,0) -- (0.1045,0.9945);\draw (0,0) -- (-0.1045,0.9945);
			\draw (0,0) -- (0.2079,0.9781);\draw (0,0) -- (-0.2079,0.9781);
			\draw (0,0) -- (0.3090,0.9511);\draw (0,0) -- (-0.3090,0.9511);
			\draw (0,0) -- (0.4067,0.9135);\draw (0,0) -- (-0.4067,0.9135);
			\draw (0.5000,0.8660) -- (-0.4955,0.7710);\draw (-0.5000,0.8660) -- (0.4955,0.7710);
			\draw (0.5000,0.8660) -- (-0.4819,0.6768);\draw (-0.5000,0.8660) -- (0.4819,0.6768);
			\draw (0.5000,0.8660) -- (-0.4595,0.5843);\draw (-0.5000,0.8660) -- (0.4595,0.5843);
			\draw (0.5000,0.8660) -- (-0.4284,0.4944);\draw (-0.5000,0.8660) -- (0.4284,0.4944);
			\draw (0.5000,0.8660) -- (-0.3888,0.4078);\draw (-0.5000,0.8660) -- (0.3888,0.4078);
			\draw (0.5000,0.8660) -- (-0.3413,0.3254);\draw (-0.5000,0.8660) -- (0.3413,0.3254);
			\draw (0.5000,0.8660) -- (-0.2861,0.2479);\draw (-0.5000,0.8660) -- (0.2861,0.2479);
			\draw (0.5000,0.8660) -- (-0.2237,0.1759);\draw (-0.5000,0.8660) -- (0.2237,0.1759);
			\draw (0.5000,0.8660) -- (-0.1549,0.1103);\draw (-0.5000,0.8660) -- (0.1549,0.1103);
			\draw (0.5000,0.8660) -- (-0.0801,0.0514);\draw (-0.5000,0.8660) -- (0.0801,0.0514);
		\end{tikzpicture}
	}
	\caption{Tamvakis polygons $(\geo{T}_n,L(\geo{T}_n),W(\geo{T}_n))$}
	\label{figure:Tn}
\end{figure}
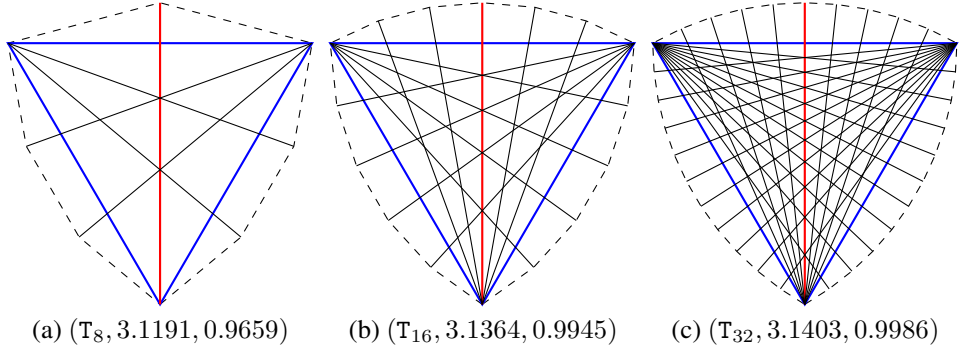

We note that $\geo{T}_4$ is optimal for the maximal perimeter problem and we can show that
\[
\ub{L}_n - L(\geo{T}_n) = \frac{\pi^3}{4n^4} + O \left(\frac{1}{n^5}\right)
\]
for all $n=2^s$ and $s\ge 2$. By contrast,
\[
\begin{aligned}
	\ub{L}_n - L(\geo{R}_n) &= \frac{\pi^3}{8n^2} + O \left(\frac{1}{n^4}\right),\\
	\ub{L}_n - L(\geo{R}_{n-1}^+) &= \frac{5\pi^3}{96n^3} + O \left(\frac{1}{n^4}\right)
\end{aligned}
\]
for all even $n\ge 4$. Tamvakis asked if $\geo{T}_n$ is also optimal when $s \ge 3$. Obviously, $\geo{T}_8$ is not optimal, i.e., $L(\geo{T}_8) < L_8^*$.

For all $n = 2^s$ with integer $s\ge 2$, let $\geo{P}_n^*$ denote a convex small $n$-gon with the longest perimeter.
\begin{conjecture}[Mossinghoff~\cite{mossinghoff2006b}]
	\label{thm:perimeter:diam}
	For all $n = 2^s$ with integer $s\ge 2$, the diameter graph of $\geo{P}_n^*$ has a cycle of length $n/2+1$, plus $n/2-1$ additional pendant edges, arranged so that all but two particular vertices of the cycle have a pendant edge.
\end{conjecture}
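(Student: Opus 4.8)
The plan is to read the diameter graph off the first-order optimality conditions of the perimeter-maximization problem. Write the optimal polygon $\geo{P}_n^*$ as vertices $v_1,\dots,v_n\in\R^2$ in convex position; it maximizes $L=\sum_{i=1}^n\|v_{i+1}-v_i\|$ subject to $\|v_i-v_j\|\le 1$ for all $i\ne j$, and the diameter graph $\G$ is exactly the graph of constraints active at the optimum. First I would invoke the classical structure of the diameter graph of \emph{any} planar set of diameter one (Hopf--Pannwitz, Erd\H{o}s): $\G$ has at most $n$ edges, each connected component contains at most one cycle, and every cycle has odd length. Hence $\G$ is automatically an odd pseudoforest, the coarse shape predicted by the conjecture.

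The core step is to upgrade ``pseudoforest'' to ``a single odd cycle carrying pendant trees,'' and here optimality enters. At each vertex the partial perimeter gradient $\widehat{v_i-v_{i-1}}+\widehat{v_i-v_{i+1}}$ (unit vectors along the two incident edges) is nonzero and points into the exterior angle, so if $v_i$ lay at distance strictly less than one from every other vertex we could displace it in that direction, preserving convexity and all diameter constraints while strictly increasing $L$; thus every vertex has degree at least one in $\G$. I would then read the stationarity condition as a tensegrity balance: at each vertex the perimeter gradient equals a nonnegative combination of the active chord directions $v_i-v_j$. Feeding this balance and the convex cyclic order into the pseudoforest structure should force $\G$ to be connected with exactly $n$ edges, hence one odd cycle with the remaining vertices hung in trees.

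Next I would pin the fine structure using the elementary fact that two diameters sharing a vertex subtend an angle at most $\pi/3$ (their far endpoints lie on a unit circle about the shared vertex and are at most one apart). Combined with convexity and the balance condition, this should rule out hanging trees of depth two or more and more than one pendant at a single cycle vertex, so that every hanging tree is a single pendant edge. Granting that the cycle has length $n/2+1$, the $n-(n/2+1)=n/2-1$ pendant vertices then attach one per cycle vertex, leaving exactly $(n/2+1)-(n/2-1)=2$ cycle vertices without a pendant --- the final assertion.

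The genuine obstacle, and the reason the statement remains a conjecture, is to prove the odd cycle has length exactly $n/2+1$ rather than any other admissible odd value, such as $n-1$ with a single pendant or a short cycle feeding longer chains. This is a global trade-off between spending a vertex on an extra cycle edge versus on an optimally placed pendant: since no odd $n$-cycle exists when $n=2^s$, one must show that the marginal perimeter from lengthening the cycle drops below that from converting the freed vertex into a pendant exactly past length $n/2+1$. Making this comparison rigorous would need the explicit angular parametrization of the optimizer and a quantitative estimate uniform in $n$ that eliminates every competing cycle length simultaneously --- precisely the step that global optimization settles for $n\le 8$ but that no general argument presently closes. My proposal therefore reduces the conjecture to this single sharp extremal inequality.
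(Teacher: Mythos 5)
This statement is a \emph{conjecture} in the paper, not a theorem: it is attributed to Mossinghoff, the paper supplies no proof, and it records only that the statement has been verified for $n=4$ (Tamvakis) and $n=8$ (Audet, Hansen, and Messine, via global optimization), while the constructions $\geo{B}_n$ and the computations of Section~5 merely \emph{assume} the conjectured diameter graph. So there is no proof of the paper's to compare yours against, and your proposal --- as you yourself concede in your final paragraph --- is not a proof either: you reduce the conjecture to the ``single sharp extremal inequality'' that the optimal cycle length is exactly $n/2+1$, and that inequality \emph{is} the open content of the statement, not a simplification of it. A reduction whose residual claim is equivalent to the original claim does not advance matters.

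Beyond that self-acknowledged gap, the intermediate steps are also not secured. The classical ingredients you invoke are correct (Hopf--Pannwitz/Erd\H{o}s: at most $n$ edges, odd cycles only; minimum degree one at an optimum via the nonvanishing perimeter gradient; the $\pi/3$ bound on the angle between two diameters at a shared endpoint), and your arithmetic $(n/2+1)+(n/2-1)=n$ with exactly two pendant-free cycle vertices is fine \emph{granted} the cycle length and depth-one trees. But the passage from ``odd pseudoforest with minimum degree one'' to ``connected, exactly $n$ edges, all hanging trees of depth one'' is asserted with ``should'' and cannot follow from first-order stationarity alone: stationarity holds equally for local optima in competing combinatorial classes. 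Indeed the paper exhibits such competitors explicitly --- the polygons $\geo{Q}_n^*$ of Section~5, whose diameter graph is a cycle of length $n-1$ with a single pendant edge, satisfy the same kind of tensegrity balance within their class, and the paper separates them from $\geo{B}_n^*$ only numerically (Table~5, $L(\geo{Q}_n^*) < L(\geo{B}_n) < L(\geo{B}_n^*)$ for $3\le s\le 7$). So any ``balance condition'' argument that does not engage in a global, quantitative comparison across cycle lengths is structurally unable to single out $n/2+1$; this is exactly why the statement remains open for $n=2^s$, $s\ge 4$.
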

\begin{conjecture}[Mossinghoff~\cite{mossinghoff2006b}]
	\label{thm:perimeter:sym}
	For all $n = 2^s$ with integer $s\ge 2$, $\geo{P}_n^*$ has an axis of symmetry corresponding to one particular pendant edge in its diameter graph.
\end{conjecture}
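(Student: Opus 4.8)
Assuming Conjecture~\ref{thm:perimeter:diam}, my plan is to derive the symmetry by reducing the maximal-perimeter problem to a reflection-invariant optimization and then arguing that a concave objective on a convex, reflection-invariant feasible set is maximized only at a fixed point of the reflection. First I would use Conjecture~\ref{thm:perimeter:diam} to fix the combinatorial type of the diameter graph of $\geo{P}_n^*$: an odd cycle $C$ of length $\ell=n/2+1$ together with $n/2-1$ pendant edges attached to all but two of its vertices. Because $\ell$ is odd, $C$ has a unique reflection that fixes one vertex and interchanges the two arcs emanating from it. I would check, purely combinatorially, that the pendant-attachment pattern is invariant under exactly this reflection — with the two pendant-free cycle vertices forming a symmetric pair and the fixed vertex carrying the distinguished pendant edge — so that reflecting $\geo{P}_n^*$ across the line through that pendant edge yields a polygon with the same labelled diameter graph.

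Second, I would pass to the angular parametrization underlying the nonlinear program of Section~\ref{sec:nlo}. With the diameter graph fixed, each unit-diameter edge is represented by a unit vector whose direction is an angular variable, the vertices are recovered as partial sums of these vectors subject to closure of the odd cycle, and the unit-diameter equalities are thereby built into the parametrization rather than imposed as separate constraints. In these variables the perimeter $L$ becomes a sum of trigonometric terms, each a sine of an affine combination of the angular variables — the same shape in which $\ub{L}_n=2n\sin\frac{\pi}{2n}$ appears as $n$ equal chord terms. The geometric reflection of the first step acts on the variable vector as an affine involution $\sigma$ that reverses the order of the angular increments about the fixed vertex, and I would verify that $\sigma$ preserves the feasible region and satisfies $L(\sigma x)=L(x)$.

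Third, I would promote reflection-invariance of the optimal set into symmetry of the optimum itself. If $L$ is concave on a convex, $\sigma$-invariant feasible region $K$, then for any maximizer $x^*$ the midpoint $\tfrac{1}{2}(x^*+\sigma x^*)$ lies in $K$ and $L(\tfrac{1}{2}(x^*+\sigma x^*))\ge\tfrac{1}{2}\big(L(x^*)+L(\sigma x^*)\big)=L(x^*)$; if in addition $L$ is strictly concave along the antisymmetric direction $x^*-\sigma x^*$, the inequality is strict unless $x^*=\sigma x^*$, forcing every maximizer to be symmetric across the distinguished pendant edge. This last strict-concavity requirement is favourable rather than delicate: since $\theta\mapsto 2\sin(\tfrac{1}{2}\theta)$ is strictly concave on $(0,\pi)$, $L$ is concave wherever its angular arguments lie in that range, and strictly so transverse to $\sigma$ as long as the angle map is injective there.

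The hard part will be the convexity of $K$, not the concavity of $L$. Once the unit-diameter equalities are folded into the parametrization, the binding difficulty is the family of constraints requiring every \emph{non-edge} pair of vertices to be at distance at most $1$; in the angular variables these are genuinely nonconvex, as are the conditions keeping the vertices in convex position, and together they may spoil convexity of $K$. The two pendant-free cycle vertices, which sit asymmetrically within the variable blocks, would also need to be handled by hand. My intended remedy is local-to-global: near the optimum I would show the non-edge distance constraints are inactive, so that $K$ agrees locally with the simplex cut out only by the linear nonnegativity and total-angle constraints, on which the concavity argument applies verbatim and delivers local uniqueness and symmetry; I would then exclude distant, asymmetric competitors using the perimeter-gap estimate of Theorem~\ref{thm:Bn} against the upper bound $\ub{L}_n$. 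Making this exclusion quantitative and uniform in $s$ — so that no asymmetric configuration outside the concave regime can match the symmetric optimum — is the principal obstacle, and is precisely where an analytic proof must supersede the case-by-case global optimization of Section~\ref{sec:nlo}.
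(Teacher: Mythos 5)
You should note first that the paper contains no proof of this statement: it is Mossinghoff's conjecture, which the paper explicitly records as proven only for $n=4$ (Tamvakis) and supported numerically for $n=8$; the global optima reported in Section~\ref{sec:nlo} are computational evidence from Couenne, not a proof. So your proposal cannot be measured against a paper argument, and since it assumes Conjecture~\ref{thm:perimeter:diam} it would at best yield a conditional result. Judged on its own terms, it has a genuine gap at its load-bearing step.

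The step that fails is the convexity of the feasible set $K$, and with it the whole averaging argument. You write that the unit-diameter equalities are ``built into the parametrization,'' but the closure of the $(n/2+1)$-cycle is not: it survives as a nonlinear equality in the angles --- exactly the constraint $\sin \alpha_0 - \sum_{k=2}^{n/4} (-1)^k \sin\bigl(\alpha_0 + \sum_{i=1}^{k-1} 2\alpha_i\bigr) = -1/2$ in \eqref{eq:ngon:Bn}, i.e.\ the analogue of \eqref{eq:x2} --- and such a sinusoidal equality defines a nonconvex hypersurface. The midpoint $\tfrac{1}{2}(x^*+\sigma x^*)$ therefore generically violates closure, so $L\bigl(\tfrac{1}{2}(x^*+\sigma x^*)\bigr)\ge L(x^*)$ proves nothing about feasible points. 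Your local remedy --- that near the optimum $K$ agrees with the simplex cut out by the linear constraints --- is incorrect for the same reason: the closure equality is active at \emph{every} feasible point; it is not an inequality that can become inactive. This is precisely the structural obstruction that keeps the conjecture open: in vertex coordinates the small-polygon constraints $\|\geo{v}_i-\geo{v}_j\|\le 1$ are convex but the perimeter is a convex function (so averaging gives the inequality in the useless direction), whereas in angle coordinates $L$ is concave but feasibility is nonconvex. Finally, your endgame has no mechanism: the $O(1/n^6)$ gap of Theorem~\ref{thm:Bn} bounds how close $L_n^*$ is to $\ub{L}_n$, but proximity in objective value does not confine maximizers to a neighborhood of the symmetric configuration without a quantitative stability (coercivity) estimate, and none is supplied or available --- as you concede, that exclusion is the actual content of the conjecture, so the proposal reduces to restating the open problem.
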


Conjecture~\ref{thm:perimeter:diam} is proven for $n=4$~\cite{tamvakis1987} and $n=8$~\cite{audet2007a}. Conjecture~\ref{thm:perimeter:sym} is only proven for $n=4$~\cite{tamvakis1987}, but it is shown numerically for $n=8$ in~\cite{audet2007a}. Mossinghoff~\cite{mossinghoff2006b} constructed a family of convex small $n$-gons~$\geo{M}_n$ having the diameter graph described in Conjectures~\ref{thm:perimeter:diam} and~\ref{thm:perimeter:sym}. These polygons have the property that
	\[
	\ub{L}_n - L(\geo{M}_n) = \frac{\pi^5}{16n^5} + O \left(\frac{1}{n^6}\right)
	\]
	when $n=2^s$ and $s\ge 3$. We show $\geo{M}_n$ for some $n$ in Figure~\ref{figure:Mn}.

\begin{figure}
	\centering
	\subfloat[$(\geo{M}_8,3.1210,0.9747)$]{
		\begin{tikzpicture}[scale=4]
			\draw[dashed] (0,0) -- (0.3025,0.2254) -- (0.5000,0.5467) -- (0.4354,0.9002) -- (0,1) -- (-0.4354,0.9002) -- (-0.5000,0.5467) -- (-0.3025,0.2254) -- cycle;
			\draw[red,thick] (0,0) -- (0,1);
			\draw[blue,thick] (0,0) -- (0.4354,0.9002) -- (-0.5000,0.5467) -- (0.5000,0.5467) -- (-0.4354,0.9002) -- cycle;
			\draw (0.4354,0.9002) -- (-0.3025,0.2254);\draw (-0.4354,0.9002) -- (0.3025,0.2254);
		\end{tikzpicture}
	}
	\subfloat[$(\geo{M}_{16},3.1365,0.9943)$]{
		\begin{tikzpicture}[scale=4]
			\draw[dashed] (0,0) -- (0.1844,0.0591) -- (0.3539,0.1526) -- (0.4445,0.3246) -- (0.5000,0.5108) -- (0.4815,0.7022) -- (0.3599,0.8529) -- (0.2116,0.9774) -- (0,1) -- (-0.2116,0.9774) -- (-0.3599,0.8529) -- (-0.4815,0.7022) -- (-0.5000,0.5108) -- (-0.4445,0.3246) -- (-0.3539,0.1526) -- (-0.1844,0.0591) -- cycle;
			\draw[red,thick] (0,0) -- (0,1);
			\draw[blue,thick] (0,0) -- (0.2116,0.9774) -- (-0.3539,0.1526) -- (0.4815,0.7022) -- (-0.5000,0.5108) -- (0.5000,0.5108) -- (-0.4815,0.7022) -- (0.3539,0.1526) -- (-0.2116,0.9774) -- cycle;
			\draw (0.2116,0.9774) -- (-0.1844,0.0591);\draw (-0.2116,0.9774) -- (0.1844,0.0591);
			\draw (-0.3539,0.1526) -- (0.3599,0.8529);\draw (0.3539,0.1526) -- (-0.3599,0.8529);
			\draw (0.4815,0.7022) -- (-0.4445,0.3246);\draw (-0.4815,0.7022) -- (0.4445,0.3246);
		\end{tikzpicture}
	}
	\subfloat[$(\geo{M}_{32},3.1403,0.9987)$]{
		\begin{tikzpicture}[scale=4]
			\draw[dashed] (0,0) -- (0.0967,0.0148) -- (0.1915,0.0389) -- (0.2753,0.0895) -- (0.3537,0.1480) -- (0.4118,0.2267) -- (0.4620,0.3107) -- (0.4857,0.4057) -- (0.5000,0.5026) -- (0.4952,0.6001) -- (0.4624,0.6923) -- (0.4207,0.7808) -- (0.3551,0.8534) -- (0.2828,0.9193) -- (0.1945,0.9614) -- (0.1025,0.9947) -- (0,1) -- (-0.1025,0.9947) -- (-0.1945,0.9614) -- (-0.2828,0.9193) -- (-0.3551,0.8534) -- (-0.4207,0.7808) -- (-0.4624,0.6923) -- (-0.4952,0.6001) -- (-0.5000,0.5026) -- (-0.4857,0.4057) -- (-0.4620,0.3107) -- (-0.4118,0.2267) -- (-0.3537,0.1480) -- (-0.2753,0.0895) -- (-0.1915,0.0389) -- (-0.0967,0.0148) -- cycle;
			\draw[red,thick] (0,0) -- (0,1);
			\draw[blue,thick] (0,0) -- (0.1025,0.9947) -- (-0.1915,0.0389) -- (0.2828,0.9193) -- (-0.3537,0.1480) -- (0.4207,0.7808) -- (-0.4620,0.3107) -- (0.4952,0.6001) -- (-0.5000,0.5026) -- (0.5000,0.5026) -- (-0.4952,0.6001) -- (0.4620,0.3107) -- (-0.4207,0.7808) -- (0.3537,0.1480) -- (-0.2828,0.9193) -- (0.1915,0.0389) -- (-0.1025,0.9947) -- cycle;
			\draw (0.1025,0.9947) -- (-0.0967,0.0148);\draw (-0.1025,0.9947) -- (0.0967,0.0148);
			\draw (-0.1915,0.0389) -- (0.1945,0.9614);\draw (0.1915,0.0389) -- (-0.1945,0.9614);
			\draw (0.2828,0.9193) -- (-0.2753,0.0895);\draw (-0.2828,0.9193) -- (0.2753,0.0895);
			\draw (-0.3537,0.1480) -- (0.3551,0.8534);\draw (0.3537,0.1480) -- (-0.3551,0.8534);
			\draw (0.4207,0.7808) -- (-0.4118,0.2267);\draw (-0.4207,0.7808) -- (0.4118,0.2267);
			\draw (-0.4620,0.3107) -- (0.4624,0.6923);\draw (0.4620,0.3107) -- (-0.4624,0.6923);
			\draw (0.4952,0.6001) -- (-0.4857,0.4057);\draw (-0.4952,0.6001) -- (0.4857,0.4057);
		\end{tikzpicture}
	}
	\caption{Mossinghoff polygons $(\geo{M}_n,L(\geo{M}_n),W(\geo{M}_n))$}
	\label{figure:Mn}
\end{figure}
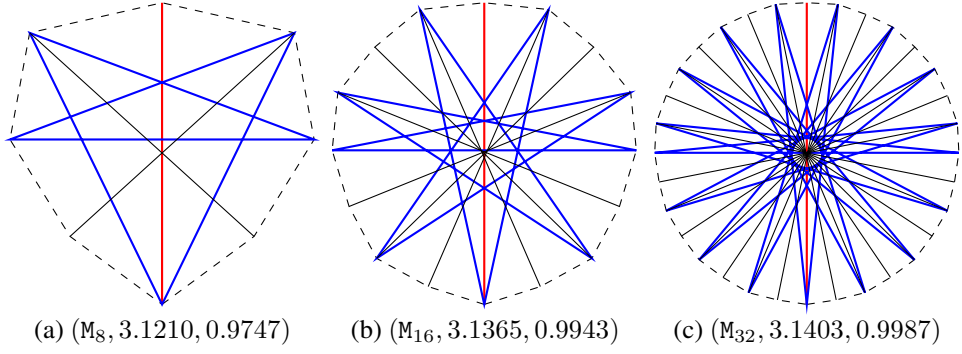

On the other hand, for all $n=2^s$ and integer $s\ge 3$,
\[
\begin{aligned}
	W(\geo{T}_n) &=
	\begin{cases}
		\cos \frac{\pi}{2n-2} &\text{if $n = 3k+1$,}\\
		\cos \frac{\pi}{2n-4} &\text{if $n = 3k+2$,}
	\end{cases}\\
	W(\geo{M}_n) &= \cos \left(\frac{\pi}{2n}+\frac{\pi^2}{4n^2} - \frac{\pi^2}{2n^3}\right),
\end{aligned}
\]
and we can show that $W(\geo{R}_{n-1}^+)\ge \max \{W(\geo{T}_n), W(\geo{M}_n)\}$. Note that
\[
\begin{aligned}
	\ub{W}_n - W(\geo{R}_n) &= \frac{3\pi^2}{8n^2} + O\left(\frac{1}{n^4}\right),\\
	\ub{W}_n - W(\geo{R}_{n-1}^+) &= \frac{\pi^2}{4n^3} + O\left(\frac{1}{n^4}\right)
\end{aligned}
\]
for all even $n\ge 4$.

\section{Proof of Theorem~\ref{thm:Bn}}\label{sec:Bn}
We use cartesian coordinates to describe an $n$-gon $\geo{P}_n$, assuming that a vertex $\geo{v}_i$, $i=0,1,\ldots,n-1$, is positioned at abscissa $x_i$ and ordinate $y_i$. Placing the vertex $\geo{v}_0$ at the origin, we set $x_0 = y_0 = 0$. We also assume that the $n$-gon $\geo{P}_n$ is in the half-plane $y\ge 0$.

For all $n=2^s$ with integer $s\ge 3$, consider the $n$-gon $\geo{P}_n$ having an $(n/2+1)$-length cycle: $\geo{v}_{0}-\geo{v}_1-\ldots-\geo{v}_k-\ldots-\geo{v}_{\frac{n}{4}} - \geo{v}_{\frac{n}{4}+1} - \ldots - \geo{v}_{\frac{n}{2}-k+1}-\ldots-\geo{v}_{\frac{n}{2}}-\geo{v}_0$ plus $n/2-1$ pendant edges: $\geo{v}_{0} - \geo{v}_{\frac{n}{2}+1}$, $\geo{v}_k - \geo{v}_{k + \frac{n}{2} + 1}$, $\geo{v}_{\frac{n}{2}-k+1} - \geo{v}_{n-k}$, $k = 1,\ldots, n/4-1$, as illustrated in Figure~\ref{figure:model}. We assume that $\geo{P}_n$ has the edge $\geo{v}_{0}-\geo{v}_{\frac{n}{2}+1}$ as axis of symmetry and for all $k=1,\ldots, n/4-1$, the pendant edge $\geo{v}_k - \geo{v}_{k + \frac{n}{2} + 1}$ bisects the angle $\angle \geo{v}_{k-1} \geo{v}_k \geo{v}_{k+1}$.

\begin{figure}
	\centering
	\begin{tikzpicture}[scale=5]		
		\draw[dashed] (0,0) node[below]{$\geo{v}_0(0,0)$} -- (0.2957,0.2043) node[below right]{$\geo{v}_7(x_7,y_7)$} -- (0.5000,0.5000) node[right]{$\geo{v}_3(x_3,y_3)$} -- (0.4114,0.9114) node[above right]{$\geo{v}_1(x_1,y_1)$} -- (0,1) node[above]{$\geo{v}_5(0,1)$} -- (-0.4114,0.9114) node[above left]{$\geo{v}_{4}(x_4,y_4)$} -- (-0.5000,0.5000) node[left]{$\geo{v}_2(x_2,y_2)$} -- (-0.2957,0.2043) node[below left]{$\geo{v}_6(x_6,y_6)$} -- cycle;
		\draw (0,0) -- (0.4114,0.9114) -- (-0.5000,0.5000) -- (0.5000,0.5000) -- (-0.4114,0.9114) -- cycle;
		\draw (0,0) -- (0,1);
		\draw (0.4114,0.9114) -- (-0.2957,0.2043);\draw (-0.4114,0.9114) -- (0.2957,0.2043);
		\draw (0.1029,0.2279) arc (65.71:90.00:0.25) node[midway,above]{${\color{red}\alpha_0}$};
		\draw (0.2716,0.7091) node{${\color{red}\alpha_1}$};
		\draw (0.2091,0.7716) node{${\color{red}\alpha_1}$};
		\draw (-0.3089,0.5411) node{${\color{red}\alpha_2}$};
	\end{tikzpicture}
	\caption{Definition of variables $\alpha_0, \alpha_1, \ldots, \alpha_{\frac{n}{4}}$ for $\geo{B}_n$: Case of $n=8$ vertices}
	\label{figure:model}
\end{figure}
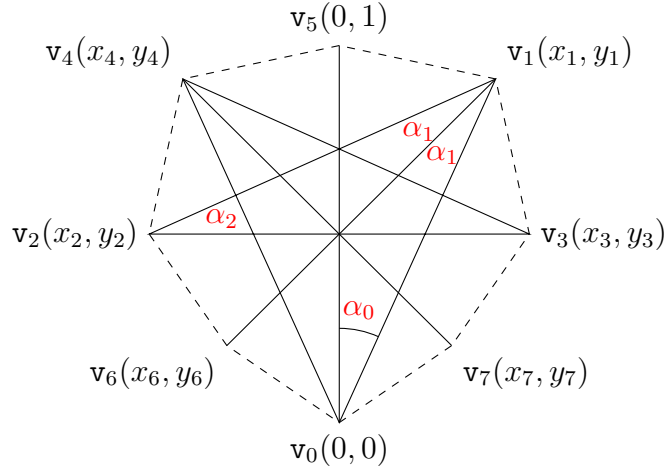

Let $\alpha_0 := \angle \geo{v}_{\frac{n}{2}+1} \geo{v}_{0} \geo{v}_1$, $2\alpha_k := \angle \geo{v}_{k-1} \geo{v}_{k} \geo{v}_{k+1}$ for all $k=1,\ldots, n/4-1$, and $\alpha_{\frac{n}{4}} := \angle \geo{v}_{\frac{n}{4}-1} \geo{v}_{\frac{n}{4}} \geo{v}_{\frac{n}{4}+1}$. Since $\geo{P}_n$ is symmetric, we have
\begin{equation}\label{eq:condition}
	\alpha_0 + 2\sum_{k=1}^{n/4-1}\alpha_k + \alpha_{n/4} = \frac{\pi}{2},
\end{equation}
and
\begin{subequations}\label{eq:LW}
	\begin{align}
		L(\geo{P}_n) &= 4\sin \frac{\alpha_0}{2} + 8 \sum_{k=1}^{n/4-1} \sin \frac{\alpha_k}{2} + 4\sin \frac{\alpha_{n/4}}{2},\label{eq:LW:L}\\
		W(\geo{P}_n) &= \min_{k=0,1,\ldots,n/4} \cos \frac{\alpha_k}{2}.
	\end{align}
\end{subequations}
By placing the vertex $\geo{v}_{0}$ at $(0,0)$ in the plane, and the vertex $\geo{v}_{\frac{n}{2}+1}$ at $(0,1)$, we have
\begin{subequations}\label{eq:xy}
	\begin{align}
		x_1 &= \sin \alpha_0 &&= -x_{\frac{n}{2}},\\
		y_1 &= \cos \alpha_0 &&= y_{\frac{n}{2}},\\
		x_k &= x_{k-1} - (-1)^k \sin \left(\alpha_0 + 2\sum_{j=1}^{k-1}\alpha_j\right) && = - x_{\frac{n}{2} - k + 1} &\forall k=2,3,\ldots,n/4\\
		y_k &= y_{k-1} - (-1)^k \cos \left(\alpha_0 + 2\sum_{j=1}^{k-1}\alpha_j\right) && = y_{\frac{n}{2} - k + 1} &\forall k=2,3,\ldots,n/4,\\
		x_{k+\frac{n}{2}+1} &= x_{k} + (-1)^k \sin \left(\alpha_0 + 2\sum_{j=1}^{k-1}\alpha_j + \alpha_k \right) && = - x_{n-k} &\forall k=1,2,\ldots,n/4-1,\\
		y_{k+\frac{n}{2}+1} &= y_{k} + (-1)^k \cos \left(\alpha_0 + 2\sum_{j=1}^{k-1}\alpha_j + \alpha_k \right) && = y_{n-k} &\forall k=1,2,\ldots,n/4-1.
	\end{align}
\end{subequations}
We also have
\begin{equation}\label{eq:x2}
	x_{\frac{n}{4}} = -1/2 = -x_{\frac{n}{4}+1}.
\end{equation}
since the edge $\geo{v}_{\frac{n}{4}} - \geo{v}_{\frac{n}{4}+1}$ is horizontal and $\|\geo{v}_{\frac{n}{4}} - \geo{v}_{\frac{n}{4}+1}\| = 1$.

For all $k=0,1,\ldots,n/4$, suppose $\alpha_k = \frac{\pi}{n} + (-1)^k \beta$ with $\beta = \beta(n)$ satisfying $|\beta| < \frac{\pi}{n}$. Then \eqref{eq:condition} is verified and \eqref{eq:LW} becomes
\begin{subequations}\label{eq:LWd}
	\begin{align}
		L(\geo{P}_n) &= n\sin \left(\frac{\pi}{2n}+\frac{\beta}{2}\right) + n\sin \left(\frac{\pi}{2n}-\frac{\beta}{2}\right) = 2n \sin \frac{\pi}{2n}\cos \frac{\beta}{2},\\
		W(\geo{P}_n) &= \cos \left(\frac{\pi}{2n} + \frac{|\beta|}{2}\right).
	\end{align}
\end{subequations}
Coordinates $(x_i,y_i)$ in~\eqref{eq:xy} are given by
\begin{subequations}\label{eq:xyd}
	\begin{align}
		x_{k} &= \sum_{j=1}^k (-1)^{j-1} \sin \left((2j-1)\frac{\pi}{n}+(-1)^{j-1}\beta\right) \nonumber\\
		&= \frac{\sin \frac{2k\pi}{n} \sin \left(\beta - (-1)^k\frac{\pi}{n}\right)}{\sin \frac{2\pi}{n}} &&= - x_{\frac{n}{2}-k+1} &\forall k=1,2,\ldots,n/4, \label{eq:xyd:3n4}\\
		y_{k} &= \sum_{j=1}^k (-1)^{j-1} \cos \left((2j-1)\frac{\pi}{n}+(-1)^{j-1}\beta\right) \nonumber\\
		&= \frac{\sin\left(\frac{\pi}{n}-\beta\right)+\cos \frac{2k\pi}{n}\sin \left(\beta - (-1)^k\frac{\pi}{n}\right)}{\sin \frac{2\pi}{n}} &&= y_{\frac{n}{2}-k+1} &\forall k=1,2,\ldots,n/4,\\
		x_{k+\frac{n}{2}+1} &= x_{k} + (-1)^k \sin \frac{2k\pi}{n} &&= - x_{n-k} &\forall k=1,2,\ldots,n/4-1,\\
		y_{k+\frac{n}{2}+1} &= y_{k} + (-1)^k \cos \frac{2k\pi}{n} &&= y_{n-k} &\forall k=1,2,\ldots,n/4-1.
	\end{align}
\end{subequations}

Finally, $\beta$ is chosen so that \eqref{eq:x2} is satisfied. It follows, from~\eqref{eq:xyd:3n4},
\[
\frac{\sin \left(\beta -\frac{\pi}{n}\right)}{\sin \frac{2\pi}{n}} = -\frac{1}{2} \Rightarrow \beta = \beta_0(n) = \frac{\pi}{n}-\arcsin \left(\frac{1}{2}\sin \frac{2\pi}{n}\right) = \frac{\pi^3}{2n^3} + \frac{\pi^5}{8n^5} + O\left(\frac{1}{n^7}\right).
\]
Let $\geo{B}_n$ denote the $n$-gon obtained by setting $\beta = \beta_0(n)$. From~\eqref{eq:LWd}, we have
\[
\begin{aligned}
	L(\geo{B}_n) &= 2n \sin \frac{\pi}{2n} \cos \left(\frac{\pi}{2n}-\frac{1}{2}\arcsin\left(\frac{1}{2}\sin \frac{2\pi}{n}\right)\right),\\
	W(\geo{B}_n) &= \cos \left(\frac{\pi}{n}-\frac{1}{2}\arcsin \left(\frac{1}{2}\sin \frac{2\pi}{n} \right)\right),
\end{aligned}
\]
and
\[
\begin{aligned}
	\ub{L}_n - L(\geo{B}_n) &= \frac{\pi^7}{32n^6} + \frac{11\pi^9}{768n^8} + O\left(\frac{1}{n^{10}}\right),\\
	\ub{W}_n - W(\geo{B}_n) &= \frac{\pi^4}{8n^4} + \frac{11\pi^6}{192n^6} + O\left(\frac{1}{n^{8}}\right).
\end{aligned}
\]
By construction, $\geo{B}_n$ is small and convex for all $n=2^s$ and $s \ge 3$. We illustrate $\geo{B}_n$ for some $n$ in Figure~\ref{figure:Bn}. This completes the proof of Theorem~\ref{thm:Bn}.\qed

\begin{figure}
	\centering
	\subfloat[$(\geo{B}_8,3.1211,0.9776)$]{
		\begin{tikzpicture}[scale=4]
			\draw[dashed] (0,0) -- (0.2957,0.2043) -- (0.5000,0.5000) -- (0.4114,0.9114) -- (0,1) -- (-0.4114,0.9114) -- (-0.5000,0.5000) -- (-0.2957,0.2043) -- cycle;
			\draw[blue,thick] (0,0) -- (0.4114,0.9114) -- (-0.5000,0.5000) -- (0.5000,0.5000) -- (-0.4114,0.9114) -- cycle;
			\draw[red,thick] (0,0) -- (0,1);
			\draw (0.4114,0.9114) -- (-0.2957,0.2043);\draw (-0.4114,0.9114) -- (0.2957,0.2043);
		\end{tikzpicture}
	}
	\subfloat[$(\geo{B}_{16},3.1365,0.9950)$]{
		\begin{tikzpicture}[scale=4]
			\draw[dashed] (0,0) -- (0.1838,0.0562) -- (0.3536,0.1464) -- (0.4438,0.3162) -- (0.5000,0.5000) -- (0.4800,0.6988) -- (0.3536,0.8536) -- (0.1988,0.9800) -- (0,1) -- (-0.1988,0.9800) -- (-0.3536,0.8536) -- (-0.4800,0.6988) -- (-0.5000,0.5000) -- (-0.4438,0.3162) -- (-0.3536,0.1464) -- (-0.1838,0.0562) -- cycle;
			\draw[blue,thick] (0,0) -- (0.1988,0.9800) -- (-0.3536,0.1464) -- (0.4800,0.6988) -- (-0.5000,0.5000) -- (0.5000,0.5000) -- (-0.4800,0.6988) -- (0.3536,0.1464) -- (-0.1988,0.9800) -- cycle;
			\draw[red,thick] (0,0) -- (0,1);
			\draw (0.1988,0.9800) -- (-0.1838,0.0562);\draw (-0.1988,0.9800) -- (0.1838,0.0562);
			\draw (-0.3536,0.1464) -- (0.3536,0.8536);\draw (0.3536,0.1464) -- (-0.3536,0.8536);
			\draw (0.4800,0.6988) -- (-0.4438,0.3162);\draw (-0.4800,0.6988) -- (0.4438,0.3162);
		\end{tikzpicture}
	}
	\subfloat[$(\geo{B}_{32},3.1403,0.9988)$]{
		\begin{tikzpicture}[scale=4]
			\draw[dashed] (0,0) -- (0.0966,0.0144) -- (0.1913,0.0381) -- (0.2751,0.0883) -- (0.3536,0.1464) -- (0.4117,0.2249) -- (0.4619,0.3087) -- (0.4856,0.4034) -- (0.5000,0.5000) -- (0.4951,0.5985) -- (0.4619,0.6913) -- (0.4198,0.7805) -- (0.3536,0.8536) -- (0.2805,0.9198) -- (0.1913,0.9619) -- (0.0985,0.9951) -- (0,1) -- (-0.0985,0.9951) -- (-0.1913,0.9619) -- (-0.2805,0.9198) -- (-0.3536,0.8536) -- (-0.4198,0.7805) -- (-0.4619,0.6913) -- (-0.4951,0.5985) -- (-0.5000,0.5000) -- (-0.4856,0.4034) -- (-0.4619,0.3087) -- (-0.4117,0.2249) -- (-0.3536,0.1464) -- (-0.2751,0.0883) -- (-0.1913,0.0381) -- (-0.0966,0.0144) -- cycle;
			\draw[blue,thick] (0,0) -- (0.0985,0.9951) -- (-0.1913,0.0381) -- (0.2805,0.9198) -- (-0.3536,0.1464) -- (0.4198,0.7805) -- (-0.4619,0.3087) -- (0.4951,0.5985) -- (-0.5000,0.5000) -- (0.5000,0.5000) -- (-0.4951,0.5985) -- (0.4619,0.3087) -- (-0.4198,0.7805) -- (0.3536,0.1464) -- (-0.2805,0.9198) -- (0.1913,0.0381) -- (-0.0985,0.9951) -- cycle;
			\draw[red,thick] (0,0) -- (0,1);
			\draw (0.0985,0.9951) -- (-0.0966,0.0144);\draw (-0.0985,0.9951) -- (0.0966,0.0144);
			\draw (-0.1913,0.0381) -- (0.1913,0.9619);\draw (0.1913,0.0381) -- (-0.1913,0.9619);
			\draw (0.2805,0.9198) -- (-0.2751,0.0883);\draw (-0.2805,0.9198) -- (0.2751,0.0883);
			\draw (-0.3536,0.1464) -- (0.3536,0.8536);\draw (0.3536,0.1464) -- (-0.3536,0.8536);
			\draw (0.4198,0.7805) -- (-0.4117,0.2249);\draw (-0.4198,0.7805) -- (0.4117,0.2249);
			\draw (-0.4619,0.3087) -- (0.4619,0.6913);\draw (0.4619,0.3087) -- (-0.4619,0.6913);
			\draw (0.4951,0.5985) -- (-0.4856,0.4034);\draw (-0.4951,0.5985) -- (0.4856,0.4034);
		\end{tikzpicture}
	}
	\caption{Polygons $(\geo{B}_n,L(\geo{B}_n),W(\geo{B}_n))$ defined in Theorem~\ref{thm:Bn}}
	\label{figure:Bn}
\end{figure}
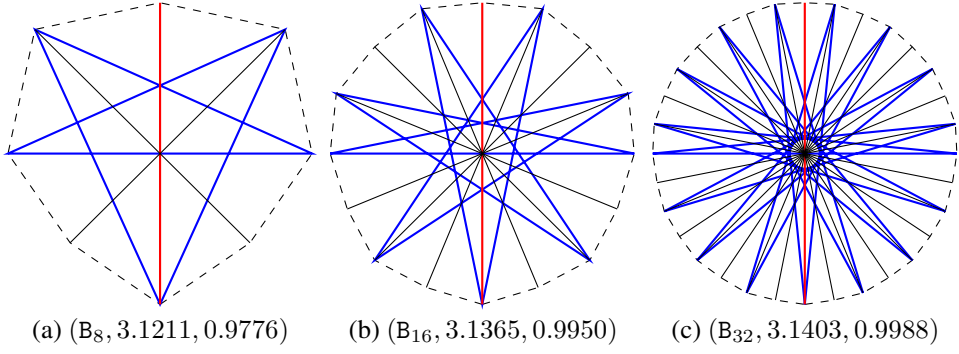

We implemented all polygons presented in this work as a MATLAB package: OPTIGON, freely available on GitHub~\cite{optigon}. In OPTIGON, MATLAB functions that give the coordinates of the vertices are provided. An algorithm developed in~\cite{bingane2022d} to estimate the maximal area of a small $n$-gon~\cite{reinhardt1922} when $n \ge 6$ is even can be also found.

Table~\ref{table:perimeter} shows the perimeters of $\geo{B}_n$, along with the upper bounds $\ub{L}_n$, the perimeters of $\geo{R}_n$, $\geo{R}_{n-1}^+$, $\geo{T}_n$, and $\geo{M}_n$ for $n=2^s$ and $3 \le s \le 7$. As suggested by Theorem~\ref{thm:Bn}, when $n$ is a power of $2$, $\geo{B}_n$ provides a tighter lower bound on the maximal perimeter~$L_n^*$ compared to the best prior convex small $n$-gon~$\geo{M}_n$. For instance, we can note that $L_{128}^* - L(\geo{B}_{128}) < \ub{L}_{128} - L(\geo{B}_{128}) < 2.15 \times 10^{-11}$. By analysing the fraction $\frac{L(\geo{B}_n) - L(\geo{M}_n)}{\ub{L}_n - L(\geo{M}_n)}$ of the length of the interval $[L(\geo{M}_n), \ub{L}_n]$ where $L(\geo{B}_n)$ lies, it is not surprising that $L(\geo{B}_n)$ approaches $\ub{L}_n$ much faster than $L(\geo{M}_n)$ does as $n$ increases. After all, $L(\geo{B}_n) - L(\geo{M}_n) \sim \frac{\pi^5}{16n^5}$ for large $n$.

\begin{table}
	\footnotesize
	\centering
	\caption{Perimeters of $\geo{B}_n$}
	\label{table:perimeter}
		\resizebox{\linewidth}{!}{
		\begin{tabular}{@{}rllllllr@{}}
			\toprule
			$n$ & $L(\geo{R}_n)$ & $L(\geo{R}_{n-1}^+)$ & $L(\geo{T}_n)$ & $L(\geo{M}_n)$ & $L(\geo{B}_n)$ & $\ub{L}_n$ & $\frac{L(\geo{B}_n)- L(\geo{M}_n)}{\ub{L}_n - L(\geo{M}_n)}$ \\
			\midrule
			8	&	3.0614674589	&	3.1181091119	&	3.1190543124	&	3.1209757852	&	3.1210621230	&	3.1214451523	&	0.1839	\\
			16	&	3.1214451523	&	3.1361407965	&	3.1364381783	&	3.1365320240	&	3.1365427675	&	3.1365484905	&	0.6524	\\
			32	&	3.1365484905	&	3.1402809876	&	3.1403234211	&	3.1403306141	&	3.1403310687	&	3.1403311570	&	0.8374	\\
			64	&	3.1403311570	&	3.1412710339	&	3.1412767980	&	3.1412772335	&	3.1412772496	&	3.1412772509	&	0.9211	\\
			128	&	3.1412772509	&	3.1415130275	&	3.1415137720	&	3.1415138006	&	3.141513801123	&	3.141513801144	&	0.9606	\\
			\bottomrule
		\end{tabular}
			}
\end{table}

Table~\ref{table:width} displays the widths of $\geo{B}_n$, along with the upper bounds $\ub{W}_n$, the widths of $\geo{R}_n$ and $\geo{R}_{n-1}^+$. Again, when $n=2^s$, $\geo{B}_n$ provides a tighter lower bound for the maximal width~$W_n^*$ compared to the best prior convex small $n$-gon~$\geo{R}_{n-1}^+$. We also remark that $W(\geo{B}_n)$ approaches $\ub{W}_n$ much faster than $W(\geo{R}_{n-1}^+)$ does as $n$ increases. It is interesting to note that $W(\geo{B}_8) = W_8^*$, i.e., $\geo{B}_8$ is an optimal solution for the maximal width problem when $n=8$.

\begin{table}
	\footnotesize
	\centering
	\caption{Widths of $\geo{B}_n$}
	\label{table:width}
		\begin{tabular}{@{}rllllr@{}}
			\toprule
			$n$ & $	W(\geo{R}_n)$ & $W(\geo{R}_{n-1}^+)$ & $W(\geo{B}_n)$ & $\ub{W}_n$ & $\frac{W(\geo{B}_n) - W(\geo{R}_{n-1}^+)}{\ub{W}_n - W(\geo{R}_{n-1}^+)}$ \\
			\midrule
			8	&	0.9238795325	&	0.9749279122	&	0.9776087734	&	0.9807852804	&	0.4577	\\
			16	&	0.9807852804	&	0.9945218954	&	0.9949956687	&	0.9951847267	&	0.7148	\\
			32	&	0.9951847267	&	0.9987165072	&	0.9987837929	&	0.9987954562	&	0.8523	\\
			64	&	0.9987954562	&	0.9996891820	&	0.9996980921	&	0.9996988187	&	0.9246	\\
			128	&	0.9996988187	&	0.9999235114	&	0.9999246565	&	0.9999247018	&	0.9619	\\
			\bottomrule
		\end{tabular}
\end{table}

Proposition~\ref{thm:Bn:xy} and Proposition~\ref{thm:Bn:area} highlight some interesting properties of $\geo{B}_n$.

\begin{proposition}\label{thm:Bn:xy}
	Let $n=2^s$ with integer $s\ge 3$.
	\begin{enumerate}
		\item The coordinates of the vertex $\geo{v}_{\frac{n}{4}}$ in $\geo{B}_n$ are $(-1/2,1/2)$.
		\item For all $k=1,\ldots,n/4-1$, the pendant edge $\geo{v}_{k} - \geo{v}_{k+\frac{n}{2}+1}$ of $\geo{B}_n$ passes through the point $\geo{u}=(0,1/2)$.
	\end{enumerate}
\end{proposition}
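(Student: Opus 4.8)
The plan is to establish both assertions by direct substitution into the closed-form coordinate expressions~\eqref{eq:xyd} derived in the proof of Theorem~\ref{thm:Bn}, taking repeated advantage of the identity that fixes $\beta = \beta_0(n)$, namely $\frac{\sin\left(\beta - \frac{\pi}{n}\right)}{\sin\frac{2\pi}{n}} = -\frac{1}{2}$, or equivalently $\frac{\sin\left(\frac{\pi}{n} - \beta\right)}{\sin\frac{2\pi}{n}} = \frac{1}{2}$. No new geometric construction is needed; everything reduces to trigonometric bookkeeping anchored on this single relation.

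For the first assertion I would simply evaluate the formulas of~\eqref{eq:xyd:3n4} at $k = n/4$. Since $n = 2^s$ with $s \ge 3$, the index $n/4 = 2^{s-2}$ is even, so $(-1)^{n/4} = 1$, while $\sin\frac{2(n/4)\pi}{n} = \sin\frac{\pi}{2} = 1$ and $\cos\frac{2(n/4)\pi}{n} = \cos\frac{\pi}{2} = 0$. The abscissa then reads $x_{n/4} = \frac{\sin\left(\beta - \frac{\pi}{n}\right)}{\sin\frac{2\pi}{n}} = -\frac{1}{2}$, recovering~\eqref{eq:x2}; and the ordinate loses its $\cos\frac{2k\pi}{n}$ term, collapsing to $y_{n/4} = \frac{\sin\left(\frac{\pi}{n} - \beta\right)}{\sin\frac{2\pi}{n}} = \frac{1}{2}$. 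Hence $\geo{v}_{n/4} = (-1/2, 1/2)$.

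For the second assertion I would prove that $\geo{u} = (0, 1/2)$ is collinear with the endpoints $\geo{v}_k$ and $\geo{v}_{k+n/2+1}$ of each pendant edge. From~\eqref{eq:xyd}, the edge vector is $\geo{v}_{k+n/2+1} - \geo{v}_k = (-1)^k\left(\sin\frac{2k\pi}{n}, \cos\frac{2k\pi}{n}\right)$, so collinearity is equivalent to the vanishing of the determinant of this vector with $\geo{u} - \geo{v}_k = (-x_k, \frac{1}{2} - y_k)$, i.e.\ to the identity $y_k\sin\frac{2k\pi}{n} - x_k\cos\frac{2k\pi}{n} = \frac{1}{2}\sin\frac{2k\pi}{n}$. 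Substituting the expressions for $x_k$ and $y_k$ from~\eqref{eq:xyd}, the two terms carrying the factor $\cos\frac{2k\pi}{n}\sin\frac{2k\pi}{n}\sin\left(\beta - (-1)^k\frac{\pi}{n}\right)$ cancel, leaving $y_k\sin\frac{2k\pi}{n} - x_k\cos\frac{2k\pi}{n} = \frac{\sin\frac{2k\pi}{n}\,\sin\left(\frac{\pi}{n} - \beta\right)}{\sin\frac{2\pi}{n}}$. Dividing through by $\sin\frac{2k\pi}{n}$, which is nonzero because $\frac{2k\pi}{n} \in \left(0, \frac{\pi}{2}\right)$ for $1 \le k \le n/4 - 1$, and invoking the defining identity gives exactly $\frac{1}{2}$, so $\geo{u}$ lies on every pendant edge.

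Both parts are mechanical once~\eqref{eq:xyd} is available, so I expect no genuine obstacle. The only places that reward care are the parity argument pinning down $(-1)^{n/4} = 1$ together with the values of $\sin,\cos$ at $\frac{\pi}{2}$ in the first part, and the cancellation of the mixed sine--cosine products in the second part --- a cancellation that is precisely what lets the single identity for $\beta$ carry the whole statement.
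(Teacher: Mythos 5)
Your proposal is correct and follows essentially the same route as the paper: part~1 is the same direct evaluation of~\eqref{eq:xyd:3n4} at $k=n/4$ using the defining identity for $\beta$, and part~2 is the same coordinate computation, differing only cosmetically in that you express collinearity through a vanishing determinant while the paper equates the slope $\tan\frac{2k\pi}{n}$ of the pendant edge with that of the segment from $\geo{u}$ to $\geo{v}_k$.
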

\begin{proof}
	Let $n=2^s$ with integer $s\ge 3$ and $\beta = \frac{\pi}{n}-\arcsin \left(\frac{1}{2}\sin \frac{2\pi}{n}\right)$.
	\begin{enumerate}
		\item We have, from~\eqref{eq:xyd:3n4},
		\[
		\begin{aligned}
			x_{\frac{n}{4}} &= \frac{\sin\left(\beta - \frac{\pi}{n}\right)}{\sin \frac{2\pi}{n}} = -\frac{1}{2},\\
			y_{\frac{n}{4}} &= \frac{\sin\left(\frac{\pi}{n} - \beta\right)}{\sin \frac{2\pi}{n}} = \frac{1}{2}.
		\end{aligned}
		\]
		\item For all $k=1,\ldots,n/4-1$, coordinates $(x_i,y_i)$ in~\eqref{eq:xyd} are
		\[
		\begin{aligned}
			x_{k} &= \frac{\sin \frac{2k\pi}{n} \sin \left(\beta - (-1)^k\frac{\pi}{n}\right)}{\sin \frac{2\pi}{n}}, &
			x_{k+\frac{n}{2}+1} &= x_{k} + (-1)^k \sin \frac{2k\pi}{n},\\
			y_{k} &= \frac{1}{2} + \frac{\cos \frac{2k\pi}{n}\sin \left(\beta - (-1)^k\frac{\pi}{n}\right)}{\sin \frac{2\pi}{n}}, & y_{k+\frac{n}{2}+1} &= y_{k} + (-1)^k \cos \frac{2k\pi}{n}.
		\end{aligned}
		\]
		It follows that, for all $k=1,\ldots,n/4-1$,
		\[
		\frac{x_{k+\frac{n}{2}+1} - x_{k}}{y_{k+\frac{n}{2}+1} - y_{k}} = \tan \frac{2k\pi}{n} = \frac{x_{k}}{y_{k} - \frac{1}{2}},
		\]
		i.e., the pendant edge $\geo{v}_{k} - \geo{v}_{k+\frac{n}{2}+1}$ passes through the point $\geo{u}=(0,1/2)$.
	\end{enumerate}
\end{proof}

\begin{proposition}\label{thm:Bn:area}
	Let $n=2^s$ with integer $s\ge 3$. The area of $\geo{B}_n$ is $\frac{n}{8} \sin \frac{2\pi}{n}$, which is the area of the regular small $n$-gon $\geo{R}_n$.
\end{proposition}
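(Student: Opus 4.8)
The plan is to compute the area of $\geo{B}_n$ by triangulating it from the single interior point $\geo{u}=(0,1/2)$ identified in Proposition~\ref{thm:Bn:xy}, rather than by a direct shoelace sum over the combinatorially awkward convex-hull order. The reason for choosing $\geo{u}$ is that, by Proposition~\ref{thm:Bn:xy}, every pendant edge of $\geo{B}_n$ passes through $\geo{u}$, and so do the axis $\geo{v}_0\geo{v}_{n/2+1}$ and the horizontal chord $\geo{v}_{n/4}\geo{v}_{n/4+1}$; hence all $n$ vertices are grouped into $n/2$ unit-length chords through the common point $\geo{u}$. Writing each vertex in polar coordinates $(\rho,\psi)$ centred at $\geo{u}$ and listing them in increasing angle $\psi$ (which coincides with the hull order, since $\geo{u}$ is interior and $\geo{B}_n$ is convex), the triangulation gives
\[
  \mathrm{area}(\geo{B}_n)=\frac12\sum_{j=0}^{n-1}\rho_j\,\rho_{j+1}\,\sin(\psi_{j+1}-\psi_j).
\]

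First I would establish the two facts that make this sum collapse. Using the closed forms in~\eqref{eq:xyd}, a short computation gives $\geo{v}_k-\geo{u}=\frac{\sin(\beta-(-1)^k\pi/n)}{\sin(2\pi/n)}\bigl(\sin\frac{2k\pi}{n},\cos\frac{2k\pi}{n}\bigr)$ for $k=1,\dots,n/4$, so that $\geo{v}_k$ lies on the line through $\geo{u}$ of direction $\frac{\pi}{2}-\frac{2k\pi}{n}$, at radius $\rho_{\geo{v}_k}=\lvert\sin(\beta-(-1)^k\pi/n)\rvert/\sin\frac{2\pi}{n}$. Since $\beta=\frac{\pi}{n}-\arcsin(\frac12\sin\frac{2\pi}{n})$ yields $\sin(\beta-\frac{\pi}{n})=-\frac12\sin\frac{2\pi}{n}$, the even-$k$ vertices have radius exactly $\tfrac12$, while the odd-$k$ vertices share a common radius $r:=\sin(\frac{2\pi}{n}-\arcsin(\frac12\sin\frac{2\pi}{n}))/\sin\frac{2\pi}{n}$. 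Combining this with the reflection symmetry and the through-$\geo{u}$ property (which forces each pendant vertex to sit diametrically opposite its cycle partner at radius $1-\rho$), I would show that the $n$ vertices occupy exactly the equally spaced directions $\psi_j=-\frac{\pi}{2}+j\frac{2\pi}{n}$, $j=0,\dots,n-1$; that every even-$j$ ray carries radius $\tfrac12$; and that the odd-$j$ rays carry radii $r$ and $1-r$, paired as the two ends of each odd chord.

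With $\psi_{j+1}-\psi_j\equiv\frac{2\pi}{n}$, the area reduces to $\frac12\sin\frac{2\pi}{n}\sum_{j}\rho_j\rho_{j+1}$. Because consecutive indices have opposite parity, each product $\rho_j\rho_{j+1}$ has exactly one factor equal to $\tfrac12$, whence $\sum_{j}\rho_j\rho_{j+1}=\sum_{j\ \mathrm{odd}}\rho_j$; and the odd-$j$ radii, pairing into $r+(1-r)=1$ across the $n/4$ odd chords, sum to $n/4$. This gives $\mathrm{area}(\geo{B}_n)=\frac{n}{8}\sin\frac{2\pi}{n}$, the stated area of $\geo{R}_n$. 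I expect the main obstacle to be the bookkeeping in the middle step: correctly translating the vertex labels $\geo{v}_i$ into their angular positions $\psi_j$ around $\geo{u}$, verifying that the parity of $j$ matches the radius dichotomy for all vertex types (cycle, reflected-cycle, pendant, and the two exceptional vertices $\geo{v}_{n/4},\geo{v}_{n/4+1}$ carrying no pendant edge), and confirming that this angular order is exactly the convex-hull order. Once that combinatorial correspondence is pinned down, the trigonometric collapse is immediate.
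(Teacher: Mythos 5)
Your proposal is correct and follows essentially the same route as the paper: the paper likewise decomposes $\geo{B}_n$ about the point $\geo{u}=(0,1/2)$ (into quadrilaterals $\geo{u}\,\geo{v}_{k+1}\,\geo{v}_{k+n/2+1}\,\geo{v}_{k-1}$, i.e., pairs of your fan triangles), and your pairing $r+(1-r)=1$ across each odd chord is exactly the paper's cancellation of the terms $\tfrac12\sin\left(\tfrac{\pi}{n}+\beta\right)$ and $\tfrac12\sin\tfrac{2\pi}{n}-\tfrac12\sin\left(\tfrac{\pi}{n}+\beta\right)$. The only difference is bookkeeping: you make the polar structure around $\geo{u}$ (equally spaced rays with radii $\tfrac12$, $r$, $1-r$) fully explicit, whereas the paper evaluates each quadrilateral directly via its perpendicular diagonals.
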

\begin{proof}
	Let $n=2^s$ with integer $s\ge 3$ and $\beta = \frac{\pi}{n}-\arcsin \left(\frac{1}{2}\sin \frac{2\pi}{n}\right)$. Let $A_0$ be the area of the quadrilateral $\geo{u} \geo{v}_{1} \geo{v}_{\frac{n}{2}+1} \geo{v}_{\frac{n}{2}}$, $A_k$~be the area of the quadrilateral $\geo{u} \geo{v}_{k+1} \geo{v}_{k+\frac{n}{2}+1} \geo{v}_{k-1}$ for all $k=1,\ldots,n/4-1$, and $A_{\frac{n}{4}}$~be the area of the triangle $\geo{u} \geo{v}_{\frac{n}{4}+1} \geo{v}_{\frac{n}{4}-1}$, where $\geo{u}=(0,1/2)$. The area of~$\geo{B}_n$ is given by
	\[
	A(\geo{B}_n) = A_0 + 2\sum_{k=1}^{n/4-1}A_k + 2A_{\frac{n}{4}}.
	\]
	
	We have
	\[
	\begin{aligned}
		A_0 &= \frac{1}{2}\|\geo{v}_{\frac{n}{2}+1}-\geo{u}\| \|\geo{v}_{\frac{n}{2}} - \geo{v}_{1}\| = \frac{1}{2}\sin \left(\frac{\pi}{n}+\beta\right),\\
		A_k &= \frac{1}{2}\|\geo{v}_{k+\frac{n}{2}+1}-\geo{u}\| \|\geo{v}_{k-1} - \geo{v}_{k+1}\|\\
		&=
		\begin{cases}
			\frac{1}{2}\sin \left(\frac{\pi}{n}+\beta\right) &\text{if $k$ is even},\\
			\frac{1}{2}\sin\frac{2\pi}{n} - \frac{1}{2}\sin \left(\frac{\pi}{n}+\beta\right) &\text{if $k$ is odd},
		\end{cases}
	\end{aligned}
	\]
	for all $k=1,\ldots,n/4-1$, and
	\[
	A_{\frac{n}{4}} = \frac{1}{2}(x_{\frac{n}{4}+1}(y_{\frac{n}{4}-1}-1/2) - (y_{\frac{n}{4}+1}-1/2) x_{\frac{n}{4}-1}) = \frac{1}{4} \sin \left(\frac{\pi}{n}+\beta\right).
	\]
	Thus,
	\[
	A(\geo{B}_n) = \frac{n}{8}\sin \left(\frac{\pi}{n}+\beta\right) + \frac{n}{8}\left(\sin\frac{2\pi}{n} - \sin \left(\frac{\pi}{n}+\beta\right)\right) = \frac{n}{8}\sin\frac{2\pi}{n}.
	\]
\end{proof}

\section{Tight bounds on the maximal width of unit-perimeter polygons}\label{sec:wn}
Let $\hat{\geo{P}}$ denote the polygon obtained by contracting a small polygon $\geo{P}$ so that $L(\hat{\geo{P}}) = 1$. Thus, the width of the unit-perimeter polygon~$\hat{\geo{P}}$ is given by $W(\hat{\geo{P}}) = W(\geo{P})/L(\geo{P})$. For a given integer $n\ge 3$,
\[
W(\hat{\geo{R}}_n) =
\begin{cases}
	\frac{1}{2n} \cot \frac{\pi}{2n} &\text{if $n$ is odd,}\\
	\frac{1}{n} \cot \frac{\pi}{n} &\text{if $n$ is even.}\\
\end{cases}
\]
We remark that $W(\hat{\geo{R}}_n) < W(\hat{\geo{R}}_{n-1})$ for all even $n\ge 4$. The polygon $\hat{\geo{R}}_n$ does not have maximum width for any even $n\ge 4$. When $n$ is even, one can construct a unit-perimeter $n$-gon with the same width as $\hat{\geo{R}}_{n-1}$ by adding a vertex in the middle of a side of $\hat{\geo{R}}_{n-1}$.

When $n$ has an odd factor $m$, one can note that
\[
W(\hat{\geo{R}}_{m,n}) = \frac{1}{2n} \cot \frac{\pi}{2n}.
\]

\begin{theorem}[Audet, Hansen, and Messine~\cite{audet2009b}]\label{thm:width:perimeter}
	For all $n \ge 3$, let $w_n^*$ denote the maximal width among all unit-perimeter $n$-gons and let $\ub{w}_n := \frac{1}{2n} \cot \frac{\pi}{2n}$.
	\begin{itemize}
		\item When $n$ has an odd factor $m$, $w_n^* = \ub{w}_n$ is achieved by finitely many equilateral $n$-gons~\cite{mossinghoff2011,hare2013,hare2019}, including~$\hat{\geo{R}}_{m,n}$. The optimal $n$-gon~$\hat{\geo{R}}_{m,n}$ is unique if $m$ is prime and $n/m \le 2$.
		\item When $n=2^s$ with integer $s\ge 2$, $W(\hat{\geo{R}}_n) < \ub{w}_{n-1} \le w_n^* < \ub{w}_n$.
	\end{itemize}
\end{theorem}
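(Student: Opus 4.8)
The plan is to exploit that the target quantity is scale-invariant. Since width and perimeter are both $1$-homogeneous, the contraction satisfies $W(\hat{\geo P}) = W(\geo P)/L(\geo P)$, so $w_n^* = \sup W(\geo P)/L(\geo P)$ over all convex $n$-gons $\geo P$, the unit-perimeter normalisation merely fixing the scale. The theorem then splits into an upper bound $W(\geo P)/L(\geo P) \le \ub w_n$ valid for \emph{every} convex $n$-gon, together with an analysis of when it is attained.

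First I would prove the upper bound by reducing it to the sharp inequality $L(\geo P) \ge 2n\tan\frac{\pi}{2n}\,W(\geo P)$, the companion of Reinhardt's perimeter bound for the width functional; this is exactly $W/L \le \frac{1}{2n}\cot\frac{\pi}{2n} = \ub w_n$, and note that $\ub w_n = \ub W_n/\ub L_n$. To establish it I would use two standard facts: (i) the width of a convex polygon is attained in a direction orthogonal to one of its edges (a minimal altitude), and (ii) by Cauchy's formula the perimeter equals the integral of the breadth function $b(\phi)$ over a half-turn, $L(\geo P)=\int_0^{\pi} b(\phi)\,d\phi$. The breadth function of an $n$-gon is piecewise sinusoidal with $n$ corners per period and satisfies $b(\phi)\ge W(\geo P)$ everywhere; minimising $\int_0^{\pi} b(\phi)\,d\phi$ over all such $n$-corner breadth functions subject to this constraint is a finite optimisation whose solution is the regular configuration, its value being computed via the identity $\sum_{k=1}^{n-1}\lvert\sin\frac{\pi k}{n}\rvert = \cot\frac{\pi}{2n}$. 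This yields the constant $2n\tan\frac{\pi}{2n}$, with equality forcing an equilateral polygon whose breadth touches its minimum at $n$ equally spaced directions.

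With the upper bound in hand, the two cases assemble from results already available. When $n$ has an odd factor $m$, the small polygon $\geo R_{m,n}$ satisfies $W(\geo R_{m,n}) = \ub W_n$ and $L(\geo R_{m,n}) = \ub L_n$, so its contraction achieves $W(\hat{\geo R}_{m,n}) = \ub W_n/\ub L_n = \ub w_n$; combined with the upper bound this gives $w_n^* = \ub w_n$. The equality analysis of the sharp inequality identifies the extremal unit-perimeter $n$-gons with the contractions of the convex small $n$-gons that are simultaneously perimeter- and width-optimal, so the finiteness and the uniqueness-for-$m$-prime-with-$n/m\le 2$ statements are inherited from Theorem~\ref{thm:perimeter} and Theorem~\ref{thm:width} and the references therein.

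For $n=2^s$ the upper bound is strict, because its equality case demands the odd-type equilateral (Reuleaux-based) configuration that does not exist when $n$ is a power of two — the same obstruction that makes $L_n^* < \ub L_n$ and $W_n^* < \ub W_n$. The lower bound $\ub w_{n-1}\le w_n^*$ is constructive: since $n-1$ is odd, some unit-perimeter $(n-1)$-gon attains width $\ub w_{n-1}$, and inserting a vertex in the middle of one of its edges gives a unit-perimeter $n$-gon of the same width, whence $w_n^*\ge \ub w_{n-1}$. Finally $W(\hat{\geo R}_n) < \ub w_{n-1}$ is just the already-noted monotonicity $W(\hat{\geo R}_n) < W(\hat{\geo R}_{n-1})$ together with $W(\hat{\geo R}_{n-1}) = \ub w_{n-1}$ (valid since $n-1$ is odd). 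I expect the main obstacle to be the sharp upper bound of the second paragraph — pinning down the exact constant $\cot\frac{\pi}{2n}$ and characterising its equality case — while the remaining steps are either explicit constructions or direct consequences of the cited theorems.
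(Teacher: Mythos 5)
The paper does not prove this statement: it is quoted as a known theorem of Audet, Hansen, and Messine~\cite{audet2009b}, with the finiteness and uniqueness assertions further delegated to~\cite{mossinghoff2011,hare2013,hare2019}. So there is no in-paper proof to compare against; your text is a reconstruction of the cited source's argument, and its architecture is sound. The scale-invariance reduction, the observation $\ub{w}_n=\ub{W}_n/\ub{L}_n$, the lower bound $\ub{w}_{n-1}\le w_n^*$ obtained by splitting an edge of an optimal unit-perimeter $(n-1)$-gon (the same device the paper uses in the sentence preceding the theorem), and the chain $W(\hat{\geo{R}}_n)<W(\hat{\geo{R}}_{n-1})=\ub{w}_{n-1}$ for $n-1$ odd are all correct.

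The one place where your sketch carries essentially all of the mathematical weight but supplies none of it is the sharp inequality $L(\geo{P})\ge 2n\tan\frac{\pi}{2n}\,W(\geo{P})$: declaring the minimisation of $\int_0^\pi b(\phi)\,d\phi$ ``a finite optimisation whose solution is the regular configuration'' restates the claim rather than proving it. It can be completed along exactly the lines you set up: on $[0,\pi)$ the breadth function consists of at most $n$ arcs of the form $d_i\cos(\phi-\psi_i)$, where $d_i$ is the length of a diagonal joining an antipodal pair of vertices; such an arc is concave where positive, so the constraint $b\ge W$ reduces to its two endpoint values, and minimising over $d_i$ and the phase shows the integral over an arc of angular length $\ell_i$ is at least $2W\tan(\ell_i/2)$; convexity of $\tan$ on $[0,\pi/2)$ and the fact that $m\tan\frac{\pi}{2m}$ decreases in $m$ then give $\int_0^\pi b\ge 2nW\tan\frac{\pi}{2n}$. (The identity $\sum_{k}\sin\frac{k\pi}{n}=\cot\frac{\pi}{2n}$ that you invoke is true but is not where the constant comes from.) The remaining soft spots --- that equality forces the Reinhardt configuration, hence finiteness and the stated uniqueness when $n$ has an odd factor, and that the nonexistence of such configurations for $n=2^s$ yields the strict inequality $w_n^*<\ub{w}_n$ --- you delegate to the equality-case analysis and the cited classification; that is acceptable here, since the theorem statement itself leans on the same references for exactly those points.
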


When $n=2^s$, the maximal width $w_n^*$ of unit-perimeter $n$-gons is only known for $s=2$. Audet, Hansen, and Messine~\cite{audet2009b} showed that $w_4^* = \frac{1}{4}\sqrt{6\sqrt{3}-9} > \ub{w}_3 = \frac{1}{6}\sqrt{3}$. For $s\ge 3$, exact solutions appear to be presently out of reach. However, it is interesting to note that
\[
W(\hat{\geo{B}}_n) = \frac{1}{2n}\left(\cot \frac{\pi }{2n}  - \tan \left(\frac{\pi}{2n}-\frac{1}{2}\arcsin\left(\frac{1}{2}\sin \frac{2\pi}{n}\right)\right)\right)
\]
is a tighter lower bound compared to $\ub{w}_{n-1}$ on $w_n^*$ when $n=2^s$ and $s\ge 3$. Indeed, we can show that, for all $n=2^s$ and integer $s\ge 3$,
\[
\ub{w}_n - W(\hat{\geo{B}}_n) = \frac{1}{2n}\tan \left(\frac{\pi}{2n}-\frac{1}{2}\arcsin\left(\frac{1}{2}\sin \frac{2\pi}{n}\right)\right) = \frac{\pi^3}{8n^4} + O\left(\frac{1}{n^6}\right),
\]
while
\[
\begin{aligned}
	\ub{w}_n - W(\hat{\geo{R}}_n) &= \frac{\pi}{4n^2} + O\left(\frac{1}{n^4}\right),\\
	\ub{w}_n - \ub{w}_{n-1} &= \frac{\pi}{6n^3} + O\left(\frac{1}{n^4}\right)
\end{aligned}
\]
for all even $n\ge 4$.

Table~\ref{table:width:perimeter} lists the widths of $\hat{\geo{B}}_n$, along with the upper bounds $\ub{w}_n$, the lower bounds $\ub{w}_{n-1}$, and the widths of $\hat{\geo{R}}_n$ for $n=2^s$ and $3\le s \le 7$. As $n$ increases, it is not surprising that $W(\hat{\geo{B}}_n)$ approaches $\ub{W}_n$ much faster than $\ub{w}_{n-1}$ does.

\begin{table}
	\footnotesize
	\centering
	\caption{Widths of $\hat{\geo{B}}_n$}
	\label{table:width:perimeter}
		\begin{tabular}{@{}rllllr@{}}
			\toprule
			$n$ & $	W(\hat{\geo{R}}_n)$ & $\ub{w}_{n-1}$ & $W(\hat{\geo{B}}_n)$ & $\ub{w}_n$ & $\frac{W(\hat{\geo{B}}_n) - \ub{w}_{n-1}}{\ub{w}_n - \ub{w}_{n-1}}$ \\
			\midrule
			8	&	0.3017766953	&	0.3129490191	&	0.3132295145	&	0.3142087183	&	0.2227	\\
			16	&	0.3142087183	&	0.3171454818	&	0.3172268776	&	0.3172865746	&	0.5769	\\
			32	&	0.3172865746	&	0.3180374156	&	0.3180504765	&	0.3180541816	&	0.7790	\\
			64	&	0.3180541816	&	0.3182439224	&	0.3182457366	&	0.3182459678	&	0.8870	\\
			128	&	0.3182459678	&	0.3182936544	&	0.3182938926	&	0.3182939071	&	0.9428	\\
			\bottomrule
		\end{tabular}
\end{table}

\section{Solving the maximal perimeter problem}\label{sec:nlo}

For any $n=2^s$ with integer $s\ge 3$, we can construct a convex small $n$-gon $\geo{B}_n^*$ with a longer perimeter than $\geo{B}_n$ by adjusting the angles $\alpha_0, \alpha_1, \ldots, \alpha_{\frac{n}{4}}$ from the parametrization of Section~\ref{sec:Bn} to maximize the perimeter $L(\geo{P}_n)$ in~\eqref{eq:LW:L}~\cite{mossinghoff2006b}. Hence, $L(\geo{B}_n^*)$ is the optimal value of the problem:
\begin{subequations}\label{eq:ngon:Bn}
	\begin{align}
		L(\geo{B}_n^*) = \max_{\rv{\alpha}} \quad & L(\geo{P}_n) = 4\sin \frac{\alpha_0}{2} + \sum_{k=1}^{n/4-1} 8\sin \frac{\alpha_k}{2} + 4\sin \frac{\alpha_{n/4}}{2}\\
		\subj \quad & \alpha_0 + \sum_{k=1}^{n/4-1} 2\alpha_k + \alpha_{n/4}  = \pi/2,\\
		& \sin \alpha_0 - \sum_{k=2}^{n/4} (-1)^k \sin \left(\alpha_0 + \sum_{i=1}^{k-1} 2\alpha_i\right) = -1/2,\\
		& 0 \le \alpha_k \le \pi/6 \quad \forall k = 0,1, \ldots, n/4-1,\\
		& 0 \le \alpha_{n/4} \le \pi/3,\\
		& L(\geo{P}_n) \ge L(\geo{B}_n).
	\end{align}
\end{subequations}
This formulation was used in~\cite{audet2007a} for $n=8$ to find the convex small $8$-gon of maximal perimeter.

For each $n = 2^s$ with integer $s \ge 2$, one can also construct a convex small $n$-gon $\geo{Q}_n^*$ with the same diameter graph as $\geo{R}_{n-1}^+$ but larger perimeter. Using a similar parametrization as in Section~\ref{sec:Bn}, we can show that
\begin{subequations}\label{eq:ngon:Qn}
	\begin{align}
		L(\geo{Q}_n^*) = \max_{\rv{\alpha}} \quad & L(\geo{P}_n) = \sum_{k=0}^{n/2-1} 4\sin \frac{\alpha_k}{2}\\
		\subj \quad & \sum_{k=0}^{n/2-1} \alpha_k  = \pi/2,\\
		& \sum_{k=0}^{n/2-2} (-1)^k \sin \left(\sum_{i=0}^{k} \alpha_i\right) = 1/2,\\
		& 0 \le \alpha_0 \le \pi/6,\\
		& 0 \le \alpha_k \le \pi/3 \quad \forall k = 1,2, \ldots, n/2-1,\\
		& L(\geo{P}_n) \ge L(\geo{R}_{n-1}^+).
	\end{align}
\end{subequations}
The variables $\alpha_0,\alpha_1,\ldots,\alpha_{\frac{n}{2}-1}$ are defined in Figure~\ref{figure:model:Qn}. Clearly, $\geo{Q}_4^* \equiv \geo{R}_3^+$ and $L(\geo{Q}_4^*) = L_4^*$.

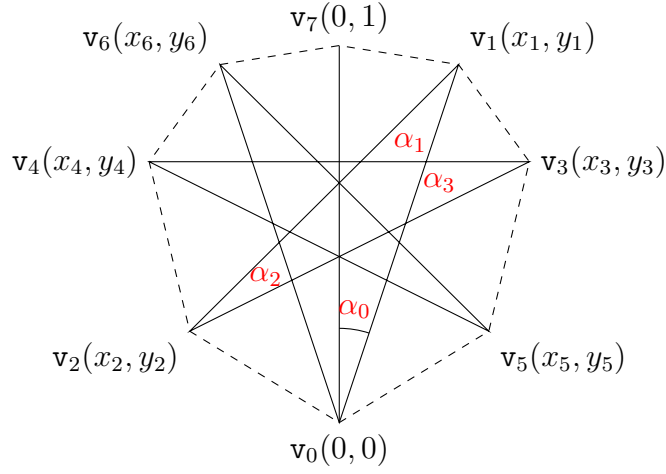
\begin{figure}
	\centering
	\begin{tikzpicture}[scale=5]
		\draw[dashed] (0,0) node[below]{$\geo{v}_0(0,0)$} -- (0.3933,0.2424) node[below right]{$\geo{v}_5(x_5,y_5)$} -- (0.5000,0.6919) node[right]{$\geo{v}_3(x_3,y_3)$} -- (0.3138,0.9495) node[above right]{$\geo{v}_1(x_1,y_1)$} -- (0,1) node[above]{$\geo{v}_7(0,1)$} -- (-0.3138,0.9495) node[above left]{$\geo{v}_{6}(x_6,y_6)$} -- (-0.5000,0.6919) node[left]{$\geo{v}_4(x_4,y_4)$} -- (-0.3933,0.2424) node[below left]{$\geo{v}_2(x_2,y_2)$} -- cycle;
		\draw ((0,0) -- (0.3138,0.9495) -- (-0.3933,0.2424) -- (0.5000,0.6919) -- (-0.5000,0.6919) -- (0.3933,0.2424) -- (-0.3138,0.9495) -- cycle;
		\draw (0,0) -- (0,1);
		\draw (0.0785,0.2374) arc (71.71:90.00:0.25) node[midway,above]{${\color{red}\alpha_0}$};
		\draw (0.1862,0.7424) node{${\color{red}\alpha_1}$};
		\draw (-0.1933,0.3870) node{${\color{red}\alpha_2}$};
		\draw (0.2633,0.6357) node{${\color{red}\alpha_3}$};
	\end{tikzpicture}
	\caption{Variables $\alpha_0,\alpha_1,\ldots,\alpha_{\frac{n}{2}-1}$ for $L(\geo{Q}_n^*)$: Case of $n=8$ vertices}
	\label{figure:model:Qn}
\end{figure}

We solved both Problems~\eqref{eq:ngon:Bn} and~\eqref{eq:ngon:Qn} on the NEOS Server~6.0 using AMPL with the solver Couenne~0.5.8, which is a branch-and-bound algorithm that aims at finding global optima of nonconvex mixed-integer nonlinear optimization problems~\cite{belotti2009}. We have made AMPL codes available in OPTIGON~\cite{optigon}.

Table~\ref{table:perimeter:optimal} gives the optimal values $L(\geo{B}_n^*)$ and $L(\geo{Q}_n^*)$ for $n=2^s$ and $3 \le s \le 7$, along with the perimeters of $\geo{B}_n$ and the upper bounds~$\ub{L}_n$. Couenne took less than 1 second to compute each $L(\geo{B}_n^*)$ or $L(\geo{Q}_n^*)$ except for $L(\geo{Q}_{16}^*)$, which was computed in 36 minutes. The results in Table~\ref{table:perimeter:optimal} support the following key points:
\begin{enumerate}
	\item The optimal perimeter $L(\geo{B}_n^*)$ for each $n \le 64$ computed agrees with the best value found in the literature.
	\item For all $n = 2^s$ and $s \ge 3$, $L(\geo{Q}_n^*) < L(\geo{B}_n) < L(\geo{B}_n^*)$, i.e., $\geo{Q}_n^*$ is a suboptimal solution.
	\item As $n$ increases, the fraction $\frac{L(\geo{B}_n^*) - L(\geo{B}_n)}{\ub{L}_n-L(\geo{B}_n)}$ appears to approach a scalar $b^* \in (0,1)$, i.e., $\ub{L}_n - L(\geo{B}_n^*) = O(1/n^6)$.
	\item For $n=8$, $L(\geo{B}_8^*) = L_8^*$.
\end{enumerate}

\begin{table}
	\footnotesize
	\centering
	\caption{Perimeters of $\geo{B}_n^*$ and $\geo{Q}_n^*$}
	\label{table:perimeter:optimal}
		\begin{tabular}{@{}rllllr@{}}
			\toprule
			$n$ & $L(\geo{Q}_n^*)$ & $L(\geo{B}_n)$ & $L(\geo{B}_n^*)$ & $\ub{L}_n$ & $\frac{L(\geo{B}_n^*) - L(\geo{B}_n)}{\ub{L}_n-L(\geo{B}_n)}$ \\
			\midrule
			8	&	3.1195976652~\cite{mossinghoff2006b}	&	3.1210621230	&	3.1211471341~\cite{mossinghoff2006b,audet2007a}	&	3.1214451523	&	0.2219	\\
			16	&	3.1364309268	&	3.1365427675	&	3.1365439563~\cite{mossinghoff2006b}	&	3.1365484905	&	0.2077	\\
			32	&	3.1403237758	&	3.1403310687	&	3.1403310858~\cite{mossinghoff2006b}	&	3.1403311570	&	0.1945	\\
			64	&	3.1412767891	&	3.1412772496	&	3.1412772498~\cite{mossinghoff2008}	&	3.1412772509	&	0.1907	\\
			128	&	3.1415137723	&	3.141513801123	&	3.141513801127	&	3.141513801144	&	0.1899	\\
			\bottomrule
		\end{tabular}
\end{table}

\begin{table}
	\footnotesize
	\centering
	\caption{Angles $\alpha_0^*, \alpha_1^*, \ldots, \alpha_{\frac{n}{4}}^*$ of $\geo{B}_n^*$}
	\label{table:angles:Bn}
			\resizebox{\linewidth}{!}{
		\begin{tabular}{@{}rl|rllllllll@{}}
			\toprule
			$n$ & $\pi/n$ & $i$ & $\alpha_{8i}^*$ & $\alpha_{8i+1}^*$ & $\alpha_{8i+2}^*$ & $\alpha_{8i+3}^*$ & $\alpha_{8i+4}^*$ & $\alpha_{8i+5}^*$ & $\alpha_{8i+6}^*$ & $\alpha_{8i+7}^*$ \\
			\midrule
			8	&	0.392699	& 0 &	0.435281	&	0.368535	&	0.398447	&&&&&\\
			16	&	0.196350	& 0 &	0.201226	&	0.191978	&	0.199873	&	0.194672	&	0.196525	&&&\\
			32	&	0.0981748	& 0 &	0.0987786	&	0.0975863	&	0.0987333	&	0.0976772	&	0.0986041	&	0.0978448	&	0.0984101	&	0.0980628	\\
			&& 1 &	0.0981803	&&&&&&&\\
			64	&	0.0490874	& 0 &	0.0491627	&	0.0490125	&	0.0491613	&	0.0490154	&	0.049157	&	0.0490211	&	0.0491501	&	0.0490293	\\
			&& 1 &	0.0491407	&	0.0490398	&	0.0491293	&	0.049052	&	0.0491164	&	0.0490657	&	0.0491022	&	0.0490802	\\
			&& 2 &	0.0490876	&&&&&&&\\
			128	&	0.0245437	& 0 &	0.0245531	&	0.0245343	&	0.0245531	&	0.0245344	&	0.0245529	&	0.0245346	&	0.0245527	&	0.0245348	\\
			&& 1 &	0.0245524	&	0.0245352	&	0.024552	&	0.0245356	&	0.0245515	&	0.0245361	&	0.024551	&	0.0245367	\\
			&& 2 &	0.0245504	&	0.0245374	&	0.0245497	&	0.0245381	&	0.0245489	&	0.0245389	&	0.0245481	&	0.0245397	\\
			&& 3 &	0.0245473	&	0.0245405	&	0.0245464	&	0.0245414	&	0.0245455	&	0.0245423	&	0.0245446	&	0.0245432	\\
			&& 4 &	0.0245437	&&&&&&&\\
			\bottomrule
		\end{tabular}
				}
\end{table}

\begin{table}
	\footnotesize
	\centering
	\caption{Angles $\alpha_0^*, \alpha_1^*, \ldots, \alpha_{\frac{n}{2}-1}^*$ of $\geo{Q}_n^*$}
	\label{table:angles:Qn}
			\resizebox{\linewidth}{!}{
		\begin{tabular}{@{}rl|rllllllll@{}}
			\toprule
			$n$ & $\pi/n$ & $i$ & $\alpha_{8i}^*$ & $\alpha_{8i+1}^*$ & $\alpha_{8i+2}^*$ & $\alpha_{8i+3}^*$ & $\alpha_{8i+4}^*$ & $\alpha_{8i+5}^*$ & $\alpha_{8i+6}^*$ & $\alpha_{8i+7}^*$ \\
			\midrule
			8	&	0.392699	&	0	&	0.301375	&	0.480058	&	0.355776	&	0.433588	&&&&\\
			16	&	0.196350	&	0	&	0.172189	&	0.219956	&	0.175546	&	0.216429	&	0.182713	&	0.210185	&	0.192054	&	0.201725	\\
			32	&	0.0981748	&	0	&	0.0920622	&	0.104242	&	0.0922572	&	0.103986	&	0.0927078	&	0.103531	&	0.0933908	&	0.102886	\\
			&&	1	&	0.0942718	&	0.10207	&	0.0953079	&	0.101105	&	0.0964509	&	0.100022	&	0.0976502	&	0.988561	\\
			64	&	0.0490874	& 0 &	0.0475548	&	0.0506167	&	0.0475665	&	0.0505995	&	0.0475937	&	0.0505686	&	0.0476362	&	0.0505242	\\
			&& 1 &	0.0476935	&	0.0504667	&	0.0477649	&	0.0503966	&	0.0478497	&	0.0503143	&	0.0479468	&	0.0502207	\\
			&& 2 &	0.0480553	&	0.0501163	&	0.0481739	&	0.0500023	&	0.0483013	&	0.0498794	&	0.0484363	&	0.0497487	\\
			&& 3 &	0.0485773	&	0.0496115	&	0.048723	&	0.0494689	&	0.0488718	&	0.0493222	&	0.0490222	&	0.0491729	\\
			128	&	0.0245437	& 0 &	0.0241605	&	0.0249269	&	0.024161	&	0.0249258	&	0.0241627	&	0.0249237	&	0.0241653	&	0.0249209	\\
			&& 1 &	0.0241688	&	0.0249171	&	0.0241733	&	0.0249124	&	0.0241787	&	0.0249069	&	0.024185	&	0.0249005	\\
			&& 2 &	0.0241922	&	0.0248933	&	0.0242002	&	0.0248853	&	0.0242091	&	0.0248765	&	0.0242189	&	0.0248669	\\
			&& 3 &	0.0242294	&	0.0248565	&	0.0242407	&	0.0248454	&	0.0242528	&	0.0248336	&	0.0242655	&	0.0248211	\\
			&& 4 &	0.024279	&	0.0248079	&	0.024293	&	0.0247941	&	0.0243077	&	0.0247797	&	0.024323	&	0.0247647	\\
			&& 5 &	0.0243388	&	0.0247492	&	0.0243551	&	0.0247332	&	0.0243718	&	0.0247167	&	0.0243889	&	0.0246999	\\
			&& 6 &	0.0244064	&	0.0246826	&	0.0244243	&	0.024665	&	0.0244424	&	0.0246471	&	0.0244607	&	0.0246289	\\
			&& 7 &	0.0244793	&	0.0246105	&	0.0244979	&	0.0245919	&	0.0245167	&	0.0245732	&	0.0245355	&	0.0245544	\\
			\bottomrule
		\end{tabular}
				}
\end{table}

The optimal angles $\alpha_k^*$ that produce $\geo{B}_n^*$ and $\geo{Q}_n^*$ are given in Tables~\ref{table:angles:Bn} and~\ref{table:angles:Qn}, respectively. We observe a pattern of damped oscillation, converging in an alterning manner to a mean value around $\pi/n$. For $\geo{Q}_n^*$, this observation leads to the following theorem:

\begin{theorem}\label{thm:Qn}
	Suppose $n=2^s$ with integer $s\ge 2$. Then there exists a convex small $n$-gon $\geo{Q}_n$ such that
	\[
	\begin{aligned}
		L(\geo{Q}_n) &= 2n \sin \frac{\pi}{2n} \cos \left( \frac{\pi}{8} - \frac{1}{2} \arcsin\left(\frac{1}{\sqrt{2}}\cos \frac{\pi}{n} \right) \right),\\
		W(\geo{Q}_n) &= \cos \left( \frac{\pi}{2n}  + \frac{\pi}{8} - \frac{1}{2} \arcsin\left(\frac{1}{\sqrt{2}}\cos \frac{\pi}{n} \right) \right),
	\end{aligned}
	\]
	and
	\[
	\begin{aligned}
		\ub{L}_n - L(\geo{Q}_n) &= \frac{\pi^5}{32n^4} + O\left(\frac{1}{n^6}\right),\\
		\ub{W}_n - W(\geo{Q}_n) &= \frac{\pi^3}{8n^3} + O\left(\frac{1}{n^4}\right).
	\end{aligned}
	\]
	In particular, for $n=4$, $L(\geo{Q}_4) = L_4^*$ and $W(\geo{Q}_4) = W_4^*$.
\end{theorem}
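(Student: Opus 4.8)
The plan is to follow the proof of Theorem~\ref{thm:Bn} in Section~\ref{sec:Bn}, but now applied to the formulation~\eqref{eq:ngon:Qn} governing $\geo{Q}_n^*$. Guided by the damped oscillation displayed in Table~\ref{table:angles:Qn}, I would impose the one-parameter ansatz $\alpha_k = \frac{\pi}{n} + (-1)^k \beta$ for $k = 0, 1, \ldots, n/2 - 1$, where $\beta = \beta(n)$ is to be determined. Since $n/2 = 2^{s-1}$ is even whenever $s \ge 2$, the signed terms cancel in pairs and $\sum_{k=0}^{n/2-1}\alpha_k = \frac{\pi}{2}$ holds identically, so the linear equality constraint of~\eqref{eq:ngon:Qn} is automatically satisfied. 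Splitting the sum into its $n/4$ even-indexed and $n/4$ odd-indexed terms, the perimeter collapses exactly as in~\eqref{eq:LWd} to
\[
L(\geo{Q}_n) = n\sin\left(\frac{\pi}{2n}+\frac{\beta}{2}\right) + n\sin\left(\frac{\pi}{2n}-\frac{\beta}{2}\right) = 2n\sin\frac{\pi}{2n}\cos\frac{\beta}{2},
\]
and a supporting-line analysis identical to the one yielding the width in~\eqref{eq:LWd} gives $W(\geo{Q}_n) = \cos\left(\frac{\pi}{2n}+\frac{|\beta|}{2}\right)$, the minimal width being attained across the widest vertex angle $\frac{\pi}{n}+|\beta|$.

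The crux is to pin down $\beta$ from the remaining (nonlinear) closure constraint $\sum_{k=0}^{n/2-2}(-1)^k\sin\left(\sum_{i=0}^{k}\alpha_i\right) = \frac{1}{2}$. Substituting the ansatz, the partial sums simplify because $\sum_{i=0}^{k}(-1)^i$ equals $1$ for even $k$ and $0$ for odd $k$; hence $\sum_{i=0}^{k}\alpha_i = \frac{(k+1)\pi}{n} + \beta$ for even $k$ and $\frac{(k+1)\pi}{n}$ for odd $k$. Grouping by parity turns the left-hand side into two arithmetic-progression sine sums, each evaluable by the Dirichlet-kernel identity $\sum_{j=0}^{m-1}\sin(a+jd) = \frac{\sin(md/2)}{\sin(d/2)}\sin\left(a+\frac{(m-1)d}{2}\right)$. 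After the telescoping cancellation between the two groups, the condition reduces to the clean relation
\[
\sin\left(\frac{\pi}{4}+\beta\right) = \frac{1}{\sqrt{2}}\cos\frac{\pi}{n},
\]
so that $\beta = \arcsin\left(\frac{1}{\sqrt{2}}\cos\frac{\pi}{n}\right) - \frac{\pi}{4} < 0$ and $\frac{|\beta|}{2} = \frac{\pi}{8} - \frac{1}{2}\arcsin\left(\frac{1}{\sqrt{2}}\cos\frac{\pi}{n}\right)$. I expect this summation-and-cancellation step to be the main technical obstacle; the rest is bookkeeping.

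Substituting $\frac{|\beta|}{2}$ into the two displayed formulas for $L(\geo{Q}_n)$ and $W(\geo{Q}_n)$ immediately produces the closed forms claimed in the statement. For the asymptotics, I would expand $\arcsin\left(\frac{1}{\sqrt{2}}\cos\frac{\pi}{n}\right)$ about $n=\infty$: with $\cos\frac{\pi}{n} = 1 - \frac{\pi^2}{2n^2}+O(n^{-4})$ and the Taylor data of $\arcsin$ at $\frac{1}{\sqrt{2}}$ (value $\frac{\pi}{4}$, derivative $\sqrt{2}$), one gets $\beta = -\frac{\pi^2}{2n^2}+O(n^{-4})$. Then $\ub{L}_n - L(\geo{Q}_n) = 2n\sin\frac{\pi}{2n}\left(1-\cos\frac{\beta}{2}\right) = \pi\cdot\frac{\beta^2}{8}+O(n^{-6}) = \frac{\pi^5}{32n^4}+O(n^{-6})$, and $\ub{W}_n - W(\geo{Q}_n) = \cos\frac{\pi}{2n}-\cos\left(\frac{\pi}{2n}+\frac{|\beta|}{2}\right) = \frac{|\beta|}{2}\sin\frac{\pi}{2n}+O(n^{-5}) = \frac{\pi^3}{8n^3}+O(n^{-4})$, as required. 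Finally, for $n=4$ the relation above gives $\beta = \arcsin\frac{1}{2}-\frac{\pi}{4} = -\frac{\pi}{12}$, whence $\alpha_0 = \frac{\pi}{6}$ and $\alpha_1 = \frac{\pi}{3}$, so $\geo{Q}_4$ coincides with $\geo{R}_3^+$ and therefore attains $L_4^*$ and $W_4^*$. It remains to check that $\geo{Q}_n$ is convex, has unit diameter, and satisfies the perimeter lower bound $L(\geo{Q}_n)\ge L(\geo{R}_{n-1}^+)$ of~\eqref{eq:ngon:Qn}; these follow by construction from the diameter-graph parametrization as in Section~\ref{sec:Bn}.
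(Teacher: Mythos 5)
Your proposal is correct and follows exactly the route the paper intends: the published proof of Theorem~\ref{thm:Qn} is simply the remark that it is ``similar to that of Theorem~\ref{thm:Bn}'', and your alternating ansatz $\alpha_k=\frac{\pi}{n}+(-1)^k\beta$ together with the closure constraint reducing to $\sin\left(\frac{\pi}{4}+\beta\right)=\frac{1}{\sqrt{2}}\cos\frac{\pi}{n}$ reproduces precisely the value $\gamma=-\beta=\frac{\pi}{4}-\arcsin\left(\frac{1}{\sqrt{2}}\cos\frac{\pi}{n}\right)$ that the paper records after the theorem. (One cosmetic slip: the remainder in your width expansion after the term $\frac{|\beta|}{2}\sin\frac{\pi}{2n}$ is $O(n^{-4})$ rather than $O(n^{-5})$, coming from the $\frac{1}{2}\left(\frac{|\beta|}{2}\right)^2\cos\frac{\pi}{2n}$ correction, but this does not affect the stated conclusion.)
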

\begin{proof}
	The proof is similar to that of Theorem~\ref{thm:Bn}.
\end{proof}

For all $n=2^s$ with integer $s\ge 2$, the diameter graph of~$\geo{Q}_n$ has a cycle of length $n-1$ plus one pendant edge. We represent some polygons $\geo{Q}_n$ in Figure~\ref{figure:ngon:Qn}. They are all symmetrical with respect to the vertical pendant edge. In $\geo{Q}_n$, the angles $\alpha_0,\alpha_1,\ldots,\alpha_{\frac{n}{2}-1}$ defined in Figure~\ref{figure:model:Qn} are given by $\alpha_k = \pi/n - (-1)^k \gamma$ for all $k = 0,1,\ldots, n/2-1$, with
\[
\gamma = \frac{\pi}{4}-\arcsin\left(\frac{1}{\sqrt{2}}\cos \frac{\pi}{n}\right) = \frac{\pi^2}{2n^2} - \frac{\pi^4}{6n^4} + O\left(\frac{1}{n^6}\right).
\]
We can show that $L(\geo{Q}_n) > L(\geo{R}_{n-1}^+)$ and $W(\geo{Q}_n) < W(\geo{R}_{n-1}^+)$ when $n=2^s$ with integer $s\ge 3$.

\begin{figure}
	\centering
	\subfloat[$(\geo{Q}_4,3.0353,0.8660)$]{
		\begin{tikzpicture}[scale=4]
			\draw[dashed] (0.5000,0.8660) -- (0,1) -- (-0.5000,0.8660);
			\draw[blue,thick] (0,0) -- (0.5000,0.8660) -- (-0.5000,0.8660) -- cycle;
			\draw[red,thick] (0,0) -- (0,1);
		\end{tikzpicture}
	}
	\subfloat[$(\geo{Q}_{8},3.1193,0.9730)$]{
		\begin{tikzpicture}[scale=4]
			\draw[dashed] (0,0) -- (0.3933,0.2424) -- (0.5000,0.6919) -- (0.3138,0.9495) -- (0,1) -- (-0.3138,0.9495) -- (-0.5000,0.6919) -- (-0.3933,0.2424) -- cycle;
			\draw[blue,thick] (0,0) -- (0.3138,0.9495) -- (-0.3933,0.2424) -- (0.5000,0.6919) -- (-0.5000,0.6919) -- (0.3933,0.2424) -- (-0.3138,0.9495) -- cycle;
			\draw[red,thick] (0,0) -- (0,1);
		\end{tikzpicture}
	}
	\subfloat[$(\geo{Q}_{16},3.1364,0.9942)$]{
		\begin{tikzpicture}[scale=4]
			\draw[dashed] (0,0) -- (0.2063,0.0604) -- (0.3738,0.1952) -- (0.4769,0.3838) -- (0.5000,0.5976) -- (0.4470,0.7665) -- (0.3333,0.9023) -- (0.1764,0.9843) -- (0,1) -- (-0.1764,0.9843) -- (-0.3333,0.9023) -- (-0.4470,0.7665) -- (-0.5000,0.5976) -- (-0.4769,0.3838) -- (-0.3738,0.1952) -- (-0.2063,0.0604) -- cycle;
			\draw[blue,thick] (0,0) -- (0.1764,0.9843) -- (-0.2063,0.0604) -- (0.3333,0.9023) -- (-0.3738,0.1952) -- (0.4470,0.7665) -- (-0.4769,0.3838) -- (0.5000,0.5976) -- (-0.5000,0.5976) -- (0.4769,0.3838) -- (-0.4470,0.7665) -- (0.3738,0.1952) -- (-0.3333,0.9023) -- (0.2063,0.0604) -- (-0.1764,0.9843) -- cycle;
			\draw[red,thick] (0,0) -- (0,1);
		\end{tikzpicture}
	}
	\subfloat[$(\geo{Q}_{32},3.1403,0.9987)$]{
		\begin{tikzpicture}[scale=4]
			\draw[dashed] (0,0) -- (0.1019,0.0149) -- (0.1989,0.0493) -- (0.2873,0.1020) -- (0.3637,0.1710) -- (0.4252,0.2535) -- (0.4695,0.3464) -- (0.4947,0.4462) -- (0.5000,0.5490) -- (0.4861,0.6413) -- (0.4544,0.7291) -- (0.4063,0.8091) -- (0.3434,0.8781) -- (0.2683,0.9335) -- (0.1838,0.9732) -- (0.0932,0.9956) -- (0,1) -- (-0.0932,0.9956) -- (-0.1838,0.9732) -- (-0.2683,0.9335) -- (-0.3434,0.8781) -- (-0.4063,0.8091) -- (-0.4544,0.7291) -- (-0.4861,0.6413) -- (-0.5000,0.5490) -- (-0.4947,0.4462) -- (-0.4695,0.3464) -- (-0.4252,0.2535) -- (-0.3637,0.1710) -- (-0.2873,0.1020) -- (-0.1989,0.0493) -- (-0.1019,0.0149) -- cycle;
			\draw[blue,thick] (0,0) -- (0.0932,0.9956) -- (-0.1019,0.0149) -- (0.1838,0.9732) -- (-0.1989,0.0493) -- (0.2683,0.9335) -- (-0.2873,0.1020) -- (0.3434,0.8781) -- (-0.3637,0.1710) -- (0.4063,0.8091) -- (-0.4252,0.2535) -- (0.4544,0.7291) -- (-0.4695,0.3464) -- (0.4861,0.6413) -- (-0.4947,0.4462) -- (0.5000,0.5490) -- (-0.5000,0.5490) -- (0.4947,0.4462) -- (-0.4861,0.6413) -- (0.4695,0.3464) -- (-0.4544,0.7291) -- (0.4252,0.2535) -- (-0.4063,0.8091) -- (0.3637,0.1710) -- (-0.3434,0.8781) -- (0.2873,0.1020) -- (-0.2683,0.9335) -- (0.1989,0.0493) -- (-0.1838,0.9732) -- (0.1019,0.0149) -- (-0.0932,0.9956) -- cycle;
			\draw[red,thick] (0,0) -- (0,1);
		\end{tikzpicture}
	}
	\caption{Polygons $(\geo{Q}_n,L(\geo{Q}_n),W(\geo{Q}_n))$ defined in Theorem~\ref{thm:Qn}}
	\label{figure:ngon:Qn}
\end{figure}

\section{Conclusion}\label{sec:conclusion}
Tighter lower bounds on the maximal perimeter and the maximal width of convex small $n$-gons were provided when $n$ is a power of $2$. For all $n=2^s$ with integer $s\ge 3$, we constructed a convex small $n$-gon $\geo{B}_n$ whose perimeter and width cannot be improved for large $n$ by more than $\frac{\pi^7}{32n^6}$ and $\frac{\pi^4}{8n^4}$ respectively.

In addition, under the assumption that Mossinghoff's conjecture is true, we formulated the maximal perimeter problem as a nonlinear optimization problem involving trigonometric functions and provided global optimal $n$-gons for $n=2^s$ and $3\le s \le 7$.

\section*{Acknowledgements}
The author thanks Charles Audet, Professor at Polytechnique Montr\'{e}al, for helpful discussions on extremal small polygons and helpful comments on early drafts of this paper.

\bibliographystyle{ieeetr}
\bibliography{../../research}

\end{document}